\documentclass[11pt,a4paper]{amsart}

\usepackage{amsmath}
\usepackage{amsfonts}
\usepackage{graphicx}
\usepackage{framed}
\usepackage{amssymb}
\usepackage{esvect}
\usepackage{xcolor}

\usepackage[backref=page]{hyperref}

\renewcommand*{\backref}[1]{}
\renewcommand*{\backrefalt}[4]{%
    \ifcase #1 (Not cited.)%
    \or        (Cited on page~#2.)%
    \else      (Cited on pages~#2.)%
    \fi}

\makeatletter
\@namedef{subjclassname@2020}{\textup{2020} Mathematics Subject Classification}
\makeatother

\usepackage{etex}
\usepackage{latexsym}
\usepackage[english]{babel}
\usepackage[utf8x]{inputenc}

\newcommand{\meanint}{-\!\!\!\!\!\!\!\int}

\def \R {\mathbb{R}}

\def \loc {\mathrm{loc}}
\def \de {\partial}
\def \LL {\mathcal{L}}

\usepackage{amsthm}
\theoremstyle{definition}
\newtheorem{definition}{Definition}[section]

\newtheorem{remark}[definition]{Remark}

\theoremstyle{plain}

\newtheorem{theorem}[definition]{Theorem}
\newtheorem{proposition}[definition]{Proposition}
\newtheorem{lemma}[definition]{Lemma}

\numberwithin{equation}{section}
\textwidth = 13.7cm

\usepackage{miama}
\usepackage[T1]{fontenc}

\begin{document}

 \begin{abstract} Given a bounded open set~$\Omega\subseteq\R^n$,
 we consider the eigenvalue problem of a nonlinear mixed local/nonlocal operator
 with vanishing conditions in the complement of~$\Omega$.
 
 We prove that the second eigenvalue~$\lambda_2(\Omega)$ is always strictly larger than the first eigenvalue~$\lambda_1(B)$ of a ball~$B$ with volume half of that of~$\Omega$.
 
 This bound is proven to be sharp, by comparing to the limit case in which~$\Omega$
 consists of two equal balls far from each other. More precisely,
 differently from the local case, an optimal shape for the second eigenvalue problem does not exist, but a minimizing sequence is given by the union of two disjoint balls of half volume whose
mutual distance tends to infinity.
 \end{abstract}
 
 \title[Hong-Krahn-Szeg\"{o} for
 mixed operators]{A Hong-Krahn-Szeg\"{o} inequality \\ for mixed local and nonlocal operators}
 
 \author[S.\,Biagi]{Stefano Biagi}
 \author[S.\,Dipierro]{Serena Dipierro}
 \author[E.\,Valdinoci]{Enrico Valdinoci}
 \author[E.\,Vecchi]{Eugenio Vecchi}
 
 \address[S.\,Biagi]{Dipartimento di Matematica
 \newline\indent Politecnico di Milano \newline\indent
 Via Bonardi 9, 20133 Milano, Italy}
 \email{stefano.biagi@polimi.it}
 
 \address[S.\,Dipierro]{Department of Mathematics and Statistics
 \newline\indent University of Western Australia \newline\indent
 35 Stirling Highway, WA 6009 Crawley, Australia}
 \email{serena.dipierro@uwa.edu.au}
 
 \address[E.\,Valdinoci]{Department of Mathematics and Statistics
 \newline\indent University of Western Australia \newline\indent
 35 Stirling Highway, WA 6009 Crawley, Australia}
 \email{enrico.valdinoci@uwa.edu.au}
 
 \address[E.\,Vecchi]{Dipartimento di Matematica
 \newline\indent Università di Bologna \newline\indent
 Piazza di Porta San Donato 5, 40126 Bologna, Italy}
 \email{eugenio.vecchi2@unibo.it}

\subjclass[2020]
{49Q10, 35R11, 47A75, 49R05}

\keywords{Operators of mixed order, first eigenvalue, shape optimization, isoperimetric inequality,
Faber-Krahn inequality, quantitative results, stability.}

\thanks{The authors are members of INdAM. S. Biagi
is partially supported by the INdAM-GNAMPA project 
\emph{Metodi topologici per problemi al contorno associati a certe 
classi di equazioni alle derivate parziali}.
S. Dipierro and E. Valdinoci are members of AustMS.
S. Dipierro is supported by
the Australian Research Council DECRA DE180100957
``PDEs, free boundaries and applications''.
E. Valdinoci is supported by the Australian Laureate Fellowship
FL190100081
``Minimal surfaces, free boundaries and partial differential equations''.
E. Vecchi is partially supported
by the INdAM-GNAMPA project 
\emph{Convergenze variazionali per funzionali e operatori dipendenti da campi vettoriali}.}

 \date{\today}
 
 \maketitle
 
 \begin{center}
{\fmmfamily{ \Huge Dedicatoria. Al Ingenioso Hidalgo Don Ireneo.}}
 \end{center}\bigskip
 
\section{Introduction} 

In this paper we consider a nonlinear operator arising from the superposition of a classical $p$-Laplace operator
and a fractional $p$-Laplace operator, of the form
\begin{equation}\label{ATORE}\mathcal{L}_{p,s} = -\Delta_p+(-\Delta)^s_p\end{equation}
with~$s\in(0,1)$ and~$p\in [2,+\infty)$. Here the fractional $p$-Laplace operator is defined, up to a multiplicative
constant that we neglect, as
$$ (-\Delta)^s_p u(x):=2\int_{\R^n}\frac{|u(x)-u(y)|^{p-2} (u(x)-u(y))}{|x-y|^{n+ps}}\,dy
.$$

Given a bounded open set~$\Omega\subseteq\R^n$, we consider the eigenvalue problem for the operator~$\mathcal{L}_{p,s}$ with homogeneous
Dirichlet boundary conditions (i.e., the ei\-gen\-fun\-ctions are prescribed to vanish
in the complement of $\Omega$). In particular, we define~$\lambda_1(\Omega)$ to be the smallest of such eigenvalues
and~$\lambda_2(\Omega)$ to be the second smallest one (in the sense made precise in~\cite{BMV, GoelSreenadh}).

The main result that we present here is a version of the Hong--Krahn--Szeg\"{o} i\-ne\-qua\-li\-ty 
for the second Dirichlet eigenvalue~$\lambda_2(\Omega)$, according to the following statement:

 \begin{theorem} \label{main:thm}
  Let $\Omega\subseteq\R^n$ be a bounded open set. 
  Let $B$ be any Euclidean ball with volume $|\Omega|/2$. Then,
  \begin{equation} \label{eq:HKS}
   \lambda_2(\Omega) > \lambda_1(B).
  \end{equation}
  Furthermore, equality is never attained in 
  \eqref{eq:HKS}; however, the estimate is sharp in the following sense:
  if $\{x_j\}_j,\,\{y_j\}_j\subseteq\R^n$ are two sequences such that
  $$\lim_{j\to+\infty}|x_j-y_j| = +\infty,$$
  and if we define $\Omega_j := B_r(x_j)\cup B_r(y_j)$, then
  \begin{equation} \label{eq:optimal}
   \lim_{j\to+\infty}\lambda_2(\Omega_j) = \lambda_1(B_r).
  \end{equation}
 \end{theorem}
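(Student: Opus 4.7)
The proof naturally splits into two parts, corresponding to the strict comparison~\eqref{eq:HKS} and the sharpness~\eqref{eq:optimal}. I would address them in this order, as the latter will use the former as a lower bound.

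\textbf{Strict inequality (Hong--Krahn--Szeg\"{o} scheme).} Let $u$ be an eigenfunction associated with $\lambda_2(\Omega)$; by the variational formulation in~\cite{BMV, GoelSreenadh}, $u$ must change sign. Set $u_\pm := \max\{\pm u, 0\}$ and $\Omega_\pm := \{\pm u > 0\}$. The critical new ingredient compared to the purely local case is the pointwise \emph{strict} comparison: for $x \in \Omega_+$ and $y \in \Omega_-$,
\[
|u(x) - u(y)|^p = \bigl(u_+(x) + u_-(y)\bigr)^p > u_+(x)^p + u_-(y)^p = |u_+(x) - u_+(y)|^p + |u_-(x) - u_-(y)|^p,
\]
valid as soon as both $u_+(x)$ and $u_-(y)$ are positive, by strict convexity of $t \mapsto t^p$ for $p \geq 2$. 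For any other relative positions of $x$ and $y$ the corresponding inequality holds with equality. Integrating, and using $|\nabla u|^p = |\nabla u_+|^p + |\nabla u_-|^p$ a.e., the Rayleigh quotient of $u$ strictly exceeds $\min\{\lambda_1(\Omega_+), \lambda_1(\Omega_-)\}$. Since $|\Omega_+| + |\Omega_-| \leq |\Omega| = 2|B|$, at least one of $|\Omega_\pm|$ does not exceed $|B|$; the Faber--Krahn inequality for $\mathcal{L}_{p,s}$, together with monotonicity of $\lambda_1$ under set inclusion (via trivial extension of admissible functions), then gives $\lambda_1(\Omega_\pm) \geq \lambda_1(B)$, proving~\eqref{eq:HKS}.

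\textbf{Sharpness.} I would compute the Rayleigh quotient of a natural sign-changing competitor. Let $\phi_1$ be the positive, $L^p$-normalized first eigenfunction on $B_r$, and set
\[
u_j(x) := \phi_1(x - x_j)\,\chi_{B_r(x_j)}(x) - \phi_1(x - y_j)\,\chi_{B_r(y_j)}(x).
\]
By translation invariance, the local Dirichlet energy and the $L^p$-norm of $u_j$ equal exactly twice the corresponding quantities of $\phi_1$ on $B_r$. The full nonlocal energy of $u_j$ decomposes as the sum of two ``self'' contributions, each equal to the nonlocal energy of the zero-extension of $\phi_1$, plus cross integrals over $B_r(x_j) \times B_r(y_j)$ and its symmetric counterpart. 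For any such pair one has $|x - y| \geq |x_j - y_j| - 2r$, and the boundedness of $\phi_1$ together with the finiteness of the ball volumes forces these cross contributions to be bounded by a constant multiple of $|x_j - y_j|^{-(n+ps)}$, hence to vanish in the limit. Consequently the Rayleigh quotient of $u_j$ tends to $\lambda_1(B_r)$. The variational characterization of $\lambda_2$ in~\cite{BMV, GoelSreenadh}, applied to the sign-changing admissible $u_j$, delivers $\lambda_2(\Omega_j) \leq$ the Rayleigh quotient of $u_j$; combining with the strict lower bound $\lambda_2(\Omega_j) > \lambda_1(B_r)$ coming from~\eqref{eq:HKS} applied to $\Omega_j$ yields~\eqref{eq:optimal}.

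\textbf{Main obstacle.} The conceptual heart of the argument is the strict improvement in the nonlocal part when $x$ and $y$ lie in opposite nodal domains: this long-range interaction is exactly what distinguishes $\mathcal{L}_{p,s}$ from $-\Delta_p$ and what makes equality in~\eqref{eq:HKS} impossible. Upgrading pointwise strictness to strict inequality of the integrals requires knowing that both nodal sets $\Omega_\pm$ have positive Lebesgue measure and that the set where $u_+(x), u_-(y) > 0$ simultaneously has positive product measure in $\Omega_+ \times \Omega_-$; both facts follow from any second eigenfunction actually possessing two non-trivial nodal components, a structural property whose verification leans on regularity properties of $\mathcal{L}_{p,s}$-eigenfunctions recorded in~\cite{BMV, GoelSreenadh}. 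The remaining ingredients, namely the Faber--Krahn inequality for $\mathcal{L}_{p,s}$ and the decay estimate for the cross integrals, are technical but rely on now-standard estimates.
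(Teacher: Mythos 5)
Your decomposition of the Rayleigh quotient of the second eigenfunction $u$ gives a genuine but \emph{insufficient} lower bound. Writing $R_\pm$ for the (local plus nonlocal) energy of $u_\pm$ and $N_\pm := \int |u_\pm|^p$, your pointwise strict inequality shows
\[
\lambda_2(\Omega) > \frac{R_+ + R_-}{N_+ + N_-},
\]
and the right-hand side is a weighted average of $R_+/N_+$ and $R_-/N_-$, hence bounded below only by $\min\{\lambda_1(\Omega_+), \lambda_1(\Omega_-)\}$, exactly as you state. But you then argue ``at least one of $|\Omega_\pm|$ does not exceed $|B|$'' and apply Faber--Krahn to \emph{that} component; you have no control on $\lambda_1$ of the \emph{other} component, which could have volume close to $|\Omega|$ and thus a first eigenvalue well below $\lambda_1(B)$. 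In that configuration the minimum on your right-hand side is attained by the large component and your chain of inequalities breaks. What is actually needed is $\lambda_2(\Omega) > \max\{\lambda_1(\Omega_+), \lambda_1(\Omega_-)\}$, so that whichever component has small volume does the job. The paper obtains the strict bound for \emph{each} sign $\pm$ separately (Lemma~\ref{lem:nodal}) by taking $v = \phi^\pm_\lambda$ as a test function in the weak formulation of the eigenvalue equation and invoking the algebraic inequality of Lemma~\ref{lem:algebrico}-(1); this separate testing, rather than a global Rayleigh-quotient split, is what upgrades the $\min$ to a $\max$. Your pointwise observation is essentially the same elementary inequality, but it is used in the wrong place.

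On the sharpness step there is a smaller, structural imprecision: the variational formula for $\lambda_2(\Omega)$ in Theorem~\ref{thm:GS} is a min-max over continuous odd maps $f:S^1\to\mathcal{M}(\Omega)$, not a minimum of the Rayleigh quotient over sign-changing functions. A single competitor $u_j$ is not admissible; one has to exhibit a whole $S^1$-family (in the paper it is $f(z_1,z_2)=|z_1|^{(2-p)/p}z_1\,\phi_j-|z_2|^{(2-p)/p}z_2\,\psi_j$) and bound the $\max$ over the entire image. Once that is done, your observations that the self-energies of $\phi_j,\psi_j$ assemble to $\lambda_1(B_r)$ and that the cross terms over $B_r(x_j)\times B_r(y_j)$ decay at least like $(|x_j-y_j|-2r)^{-1}$ (you even claim the sharper $(n+ps)$-power) are correct, and combining the upper bound with the already-established strict lower bound \eqref{eq:HKS} on $\Omega_j$ does close the sandwich; but as written the appeal to a single-function Rayleigh quotient is not compatible with the nonlinear characterization of $\lambda_2$.
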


To the best of our knowledge, Theorem~\ref{main:thm} is new even in the linear case~$p=2$.
Also, an interesting  consequence of the fact that equality in~\eqref{eq:HKS} is never attained is that, for all~$c>0$,
the shape optimization problem
$$ \inf_{|\Omega|=c}\lambda_2(\Omega)$$
does not admit a solution. \medskip

Before diving into the technicalities of the proof of Theorem~\ref{main:thm},
we recall in the forthcoming Section~\ref{SEHI}
some classical motivations to study first and second eigenvalue problems,
then we devote Section~\ref{SEHI2} to showcase
the available results on the shape optimization problems related to the first and the second eigenvalues
of several elliptic operators.

\subsection{The importance of the first and second eigenvalues}\label{SEHI}

The notion of eigenvalue seems to date back to the 18th century, due to the works of Euler 
and Lagrange on rigid bodies. Possibly inspired by Helmholtz, in his study of integral operators~\cite{HILB}
Hilbert introduced the terminology of ``Eigenfunktion'' and ``Eigenwert''
from which the modern terminology of ``eigenfunction'' and ``ei\-gen\-va\-lue'' originated.

The analysis of eigenvalues also became topical in quantum mechanics,
being equivalent in this setting to the energy of a quantum state of a system,
and in general in the study of wave phenomena, to distinguish high and low frequency components.

In modern technologies, a deep understanding of eigenvalues has become a central theme of research,
especially due to the several ranking algorithms, such as PageRank (used by search engines as Google to rank the results)
and EigenTrust (used by peer-to-peer networks to establish a trust value
on the account of authentic and corrupted resources). In a nutshell, these algorithms typically have entries
(e.g. the page rank of a given website, or the trust value of a peer) that are measured as linear superpositions
of the other entries. For instance (see Section~2.1.1 in~\cite{BRIN}, neglecting for simplicity damping factors)
one can model the page rank~$p_i$ of website~$i$ in terms of
the ratio~$R_{ij}$ between the
number of links outbound from website~$j$ to page~$i$ and the total number of outbound links of website~$j$, namely
\begin{equation}\label{FIN9}
p_i=\sum_j R_{ij} p_j.\end{equation}
Whether this is a finite or infinite sum boils
down to a merely philosophical question, given the huge number of websites explored by Google, but let us stick for
the moment with the discrete case of finitely many websites.
Interestingly~$p_i$ basically counts the probability that a random surfer visits website~$i$ by following the available links in the web.

\begin{figure}
		\centering
		\includegraphics[width=.8\linewidth]{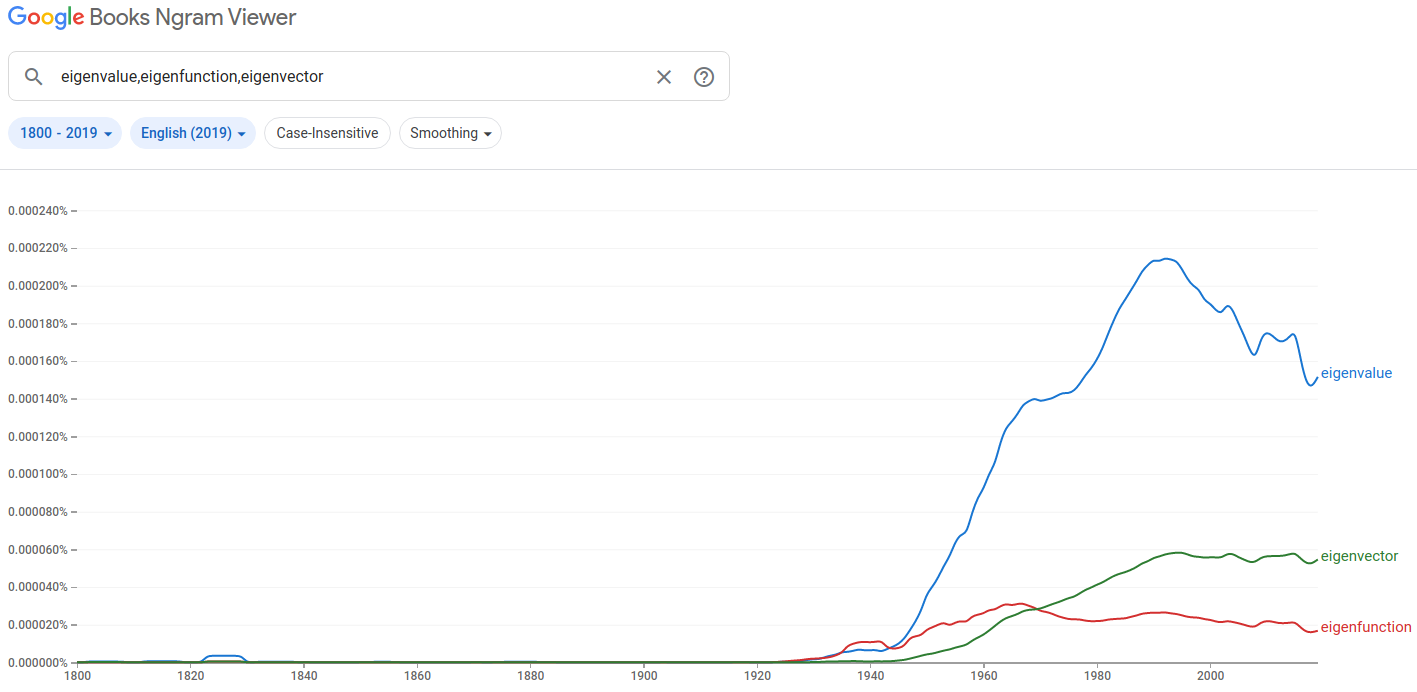}
\caption{}
	\label{PIPEDItangeFI}
\end{figure}

Now, in operator form, one can write~\eqref{FIN9}
as~$p=Rp$, with the matrix~$R$ known in principle from the outbound links of the websites and the ranking array~$p$ to be determined. Thus, up to diagonalizing~$R$, the determination of~$p$
reduces to the determination of the eigenvectors of~$R$, 
or equivalently to the determination of the eigenvectors of the inverse matrix~$A:=R^{-1}$,
and this task can be accomplished, for instance, by iterative algorithms.

The simplest of these algorithms used in PageRank is probably the power iteration method.
For instance, if one defines~$\eta_{k+1}:=\frac{A\eta_k}{|A\eta_k|}$, given
a random starting vector~$\eta_0$, it follows that~$\eta_k=\frac{A^k\eta_0}{|A^k\eta_0|}$
and consequently, if~$\eta_0=\sum_{j} c_j w_j$, being~$w_j$ the eigenvectors of~$A$ with corresponding eigenvalues~$\mu_1>\mu_2\ge\mu_3\ge...$ (normalized to have unit length), we find that
$$ \eta_k=\frac{\sum_{j} c_j \mu_j^k w_j}{\left|\sum_{j} c_j \mu_j^k w_j\right|}=
\frac{\sum_{j} d_{jk} w_j}{\left|\sum_{j} d_{jk} w_j\right|}=\frac{w_1+\sum_{j\ne1} d_{jk} w_j}{\left|{\rm{sign}} (c_1)w_1+\sum_{j\ne1} d_{jk} w_j\right|}.
$$
with~$d_{jk}:=\frac{c_j \mu_j^k}{c_1 \mu_1^k}$ (here we are assuming that the eigenvalues are positive
and that, in view of the randomness of~$\eta_0$, we have that~$c_1\ne0$).

Since
$$ \left|\sum_{j\ne1} d_{jk} w_j\right|^2=\sum_{j\ne1} d_{jk}^2=
\sum_{j\ne1}\frac{c_j^2 \mu_j^{2k}}{c_1^2 \mu_1^{2k}}=O\left(\frac{\mu_2^{2k}}{\mu_1^{2k}}\right),$$
it follows that
$$ \eta_k=w_1+O\left(\frac{\mu_2^{k}}{\mu_1^{k}}\right)
$$
and accordingly~$\eta_k$ approximates the eigenfunction~$w_1$ with a
convergence induced by the ratio~$\frac{\mu_2}{\mu_1}<1$.

That is, if~$\lambda_1<\lambda_2\le\lambda_3\le\dots$ are the eigenvalues of the matrix~$R$,
the above rate of convergence is dictated by the ratio~$\frac{\lambda_1}{\lambda_2}$
of the smallest and second smallest eigenvalues of~$R$. This is one simple, but, in our opinion quite convincing,
example of the importance of the first two eigenvalues in problems with concrete applications.

To confirm the importance of the notion of eigenvalues in the modern tech\-no\-lo\-gies,
see Figure~\ref{PIPEDItangeFI} for a Google Ngram Viewer charting the frequencies of use
of the words eigenvalue, eigenfunction and eigenvector in the last~120 years.

Also, to highlight the importance of the difference between the first and second eigenvalues,
see Figure~\ref{PIPEDItangeFI1}
for a Google Ngram Viewer charting the frequencies of use
of the words eigengap and spectral gap in the last~120 years.

In the case of the Google PageRank,
an efficient estimate of the eigengap taking into account the damping factor has been proposed in~\cite{TECH}.

\begin{figure}
		\centering
		\includegraphics[width=.8\linewidth]{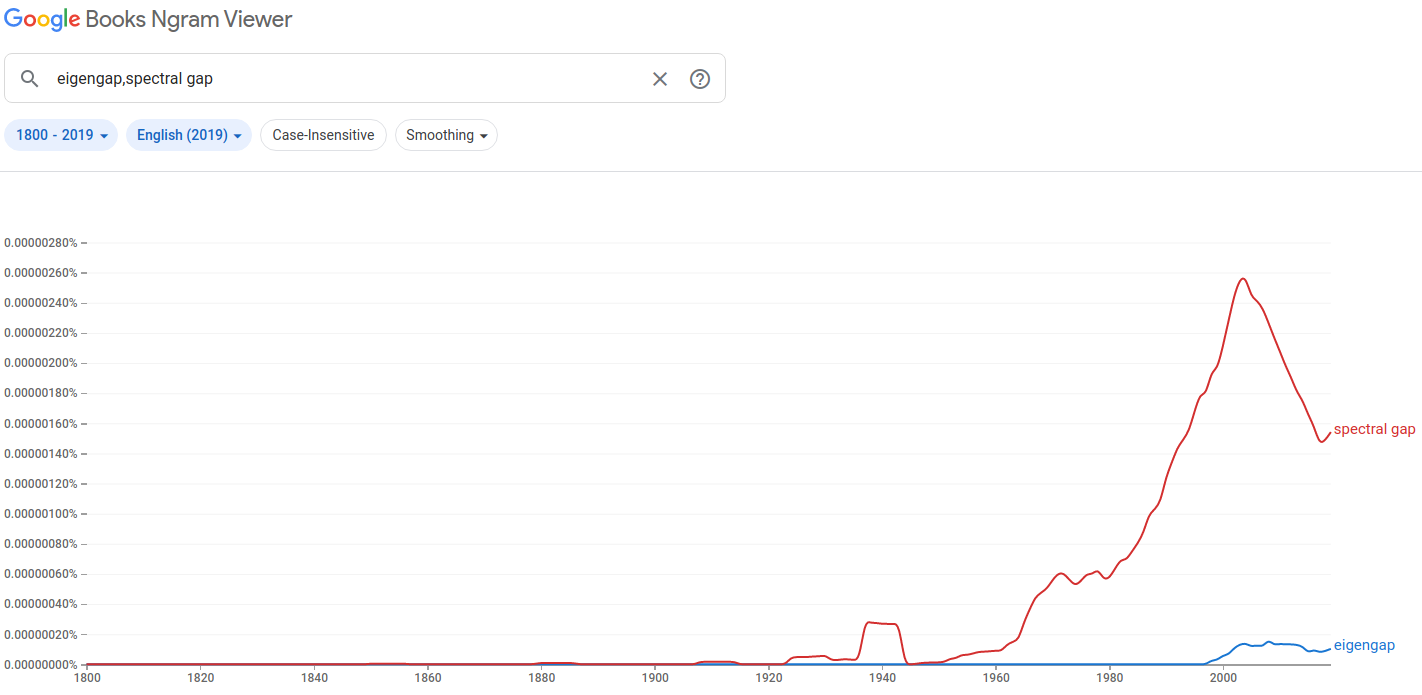}
\caption{}
	\label{PIPEDItangeFI1}
\end{figure}

\subsection{Shape optimization problems for the first and second eigenvalues in the context of elliptic (linear and nonlinear, classical and fractional) equations}\label{SEHI2}

Now we leave the realm of high-tech applications and we come back to the partial differential equations and fractional equations setting: in this fra\-me\-work, we recall here below some of the main results about
shape optimization problems related to the first and the second eigenvalues.

\subsubsection{The case of the Laplacian}\label{p2kd}

One of the classical shape optimization problem is related to the detection of
the domain that minimizes the first eigenvalue of the Laplacian with homogeneous boundary conditions.
This is the content of the Faber--Krahn inequality~\cite{FABER, KRA}, whose result can be stated
by saying that among all domains of fixed volume, the ball has the smallest first eigenvalue.

In particular, as a physical application, one has that that among all drums of equal area, the circular drum possesses the lowest voice, and this somewhat corresponds to our intuition, since a very elongated rectangular drum produces a high pitch
related to the oscillations along the short edge.

Another physical consequence of the Faber--Krahn inequality is that
among all the regions of a given volume with the boundary maintained at
a constant temperature, the one which dissipates heat at the
slowest possible rate is the sphere, and this also corresponds to our everyday life experience
of spheres mi\-ni\-mi\-zing 
contact with the external environment thus providing the optimal possible insulation.

{F}rom the mathematical point of view, the Faber--Krahn inequality also offers a classical stage for rearrangement
methods and variational characterizations of eigenvalues.

In view of the discussion in Section~\ref{SEHI}, the subsequent natural question 
in\-ve\-sti\-ga\-tes
the optimal shape of the second eigenvalue. This problem
is addressed by the Hong--Krahn--Szeg\"{o} inequality~\cite{7CK, 7CH, C7P},
which asserts that
among all domains of fixed volume, the disjoint union of two equal balls
has the smallest second eigenvalue.

Therefore, for the case of the Laplacian with homogeneous Dirichlet data, the shape optimization problems related to both the first and the second eigenvalues are solvable and the solution has a simple geometry.

It is also interesting to point out a conceptual connection between
the Faber--Krahn and the Hong--Krahn--Szeg\"{o} inequalities, in the sense that the proof of the second
typically uses the first one as a basic ingredient. More specifically, the strategy to
prove the Hong--Krahn--Szeg\"{o} inequality is usually:
\begin{itemize}
\item Use that in a connected open set all eigenfunctions except the \label{BULL}
first one must change sign,
\item Deduce that~$\lambda_2(\Omega)=\max\{\lambda_1(\Omega_+),\lambda_1(\Omega_-)\}$,
for suitable subdomain~$\Omega_+$ and~$\Omega_-$ which are either nodal domains
for the second eigenfunction, if~$\Omega$ is connected, or otherwise connected components of~$\Omega$,
\item Utilize the Faber--Krahn inequality to show that~$\lambda_1(\Omega_\pm)$ is reduced
if we replace~$\Omega_\pm$ with a ball of volume~$|\Omega_\pm|$,
\item Employ the homogeneity of the problem to deduce that the volumes of these two balls are equal.
\end{itemize}
That is, roughly speaking, a cunning use of the Faber--Krahn inequality allows one to reduce to the case
of disjoint balls, which can thus be addressed specifically.

\subsubsection{The case of the $p$-Laplacian}

A natural extension of the optimal shape results for the Laplacian recalled in Section~\ref{p2kd}
is the investigation of the non\-li\-near operator setting and in particular 
the case of the $p$-Laplacian.
This line of research was carried out in~\cite{FRANZ} in which a complete analogue of the results of
Section~\ref{p2kd} have been established for the $p$-Laplacian.
In particular, the first Dirichlet ei\-gen\-va\-lue of the $p$-Laplacian is minimized by the ball and
the second by any disjoint union of two equal balls.

We stress that, in spite of the similarity of the results obtained, the nonlinear case presents its
own specific peculiarities.
In particular, in the case of the $p$-Laplacian one can still define the first eigenvalue by minimization of
a Rayleigh quotient, in principle the notion of higher eigenvalues become more tricky,
since discreteness of the spectrum is not guaranteed and the eigenvalues theory for nonlinear operators
offers plenty of open problems at a fundamental level. 
For the second eingevalue however one can obtain a variational characterization in terms of a mountain-pass result,
still allowing the definition of a spectral gap between the smallest and the second smallest eigenvalue.

\subsubsection{The cases of the fractional Laplacian and of the fractional $p$-Laplacian}

We now consider the question posed by the minimization of the first and second eigenvalues in a nonlocal setting.

The optimal shape problems
for the first eigenvalue of the fractional Laplacian with homogeneous external datum was addressed in~\cite{BANL, IFBBPL, SIRE, CINTI},
showing that the ball is the optimizer.

As for the nonlinear case, the spectral properties of the fractional $p$-Laplacian possess their own 
special features, see~\cite{FRNPAL},
and they typically combine the dif\-fi\-cul\-ties coming from the nonlocal world with those arising from 
the theory of nonlinear operators.
In~\cite{IFBBPL} the optimal shape problem for the first Dirichlet eigenvalue of the fractional $p$-
Laplacian
was addressed, by detecting the optimality of the ball as a consequence of a general
P\`olya--Szeg\"o principle.

For the second eigenvalue, however, the situation in the
nonlocal case is quite different from the classical one,
since in general nonlocal energy functionals are deeply influenced by
the mutual position of the different connected
components of the domain, see~\cite{LIND}.

In particular, the counterpart of the Hong--Krahn--Szeg\"{o} inequality for the fractional Laplacian and the fractional $p$-Laplacian was established in~\cite{BP}
and it presents significant differences with the classical case: in particular, the shape optimizer for
the second eigenvalue of the fractional $p$-Laplacian with homogeneous external datum does not exist
and one can bound such an eigenvalue from below by the first eigenvalue of a ball with half of the volume
of the given domain (and this is the best lower bound possible, since the case of a domain consisting of two equal balls
drifting away from each other would attain such a bound in the limit).

\subsubsection{The case of mixed operators}

The study of mixed local/nonlocal operators has been recently received an increasing level of attention,
both in view of their intriguing mathematical structure, which combines the classical setting
and the features typical of nonlocal operators in a framework that is
not scale-invariant 
\cite{GaMe, JAKO1-05, JAKO2, BARLES08, Foondun, BISWAP10, CK, BAR12, CKSV, TESO17, DFR, SilvaSalort, GQR, BDVVAcc, ABA, BSM, dSOR, DPLV1, DPLV2, GarainKinnunen, GarainKinnunen2, GarainKinnunen3, GarainUkhlov, CABRE, BMV, BDVV, SalortVecchi},
and of their importance in practical applications such as the animal foraging hypothesis~\cite{DV, PAG}.

In regard to the shape optimization problem, a Faber--Krahn inequality for mixed local and nonlocal linear operators when~$p=2$ has been established in~\cite{BDVV2},
showing the optimality of the ball in the minimization of the first eigenvalue.
The corresponding inequality for the nonlinear setting presented in~\eqref{ATORE}
will be given here in the forthcoming Theorem~\ref{thm.FK}.

The inequality of Hong--Krahn--Szeg\"{o} type for mixed local and nonlocal linear operators 
presented in~\eqref{main:thm} would thus complete the study of the optimal shape problems for the first
and second eigenvalues of the operator in~\eqref{ATORE}.

\subsection{Plan of the paper}

The rest of this paper is organized as follows. Section~\ref{sec.Prel}
sets up the notation and collects some auxiliary results from the existing literature.

In Section~\ref{INTRERSEC} we discuss a regularity theory which, in our setting,
plays an important role in the proof of Theorem~\ref{main:thm} in allowing us to speak about nodal regions 
for the corresponding eigenfunction (recall the bullet point strategy pre\-sen\-ted 
on page~\pageref{BULL}).
In any case, this regularity theory holds in a more general setting and can well come in handy in other 
situations as well.

Finally, Section~\ref{LULT} introduces the corresponding Faber--Krahn inequality for 
the operator in~\eqref{ATORE}
and completes the proof of Theorem~\ref{main:thm}.

 \section{Preliminaries}\label{sec.Prel}
 
To deal with the nonlinear and mixed local/nonlocal operator in~\eqref{ATORE}, given
and open and bounded set~$\Omega\subseteq\R^n$, it is convenient to introduce the space 
 $$\mathcal{X}_0^{1,p}(\Omega)\subseteq W^{1,p}(\R^n),$$ 
 defined
 as the closure of $C_0^{\infty}(\Omega)$ with respect
 to the \emph{global norm}
 $$u\mapsto \Big(\int_{\R^n}|\nabla u|^p\,d x\Big)^{1/p}.$$
 We highlight that,
 since $\Omega$ is \emph{bounded}, $\mathcal{X}^{1,p}_0(\Omega)$ can be
 equivalently defined by taking the closure of $C_0^\infty(\Omega)$ with respect
 to the full norm
 $$u\mapsto \bigg(\int_{\R^n}|u|^p\,d x\bigg)^{1/p}
 +\bigg(\int_{\R^n}|\nabla u|^p\,dx\bigg)^{1/p};$$
 however, we stress that $\mathcal{X}_0^{1,p}(\Omega)$ \emph{is different}
 from the usual space $W^{1,p}_0(\Omega)$, which is 
 de\-fi\-ned
 as the closure of $C_0^{\infty}(\Omega)$ with respect to the norm
 $$u\mapsto \Big(\int_{\Omega}|\nabla u|^p\,d x\Big)^{1/p}.$$ 
 As a matter of fact, while
 the belonging of a function $u$ to $W^{1,p}_0(\Omega)$ only
 depends on its behavior \emph{inside of $\Omega$}
 (actually, $u$ does not even need to be defined outside of $\Omega$),
 the belonging of $u$ to $\mathcal{X}_0^{1,p}(\Omega)$ is a \emph{global}
 condition, and it depends on the behavior of $u$ \emph{on the whole space $\R^n$}
 (in particular, $u$ \emph{has to be defined} on $\R^n$).
 Just to give an example of the difference between these spaces,
 let $u\in C^\infty_0(\R^n)\setminus\{0\}$ be such that
 $$\mathrm{supp}(u)\cap \overline{\Omega} = \varnothing.$$
 Since $u\equiv 0$ inside of $\Omega$, we clearly have that
 $u\in W^{1,p}_0(\Omega)$; on the other hand, since $u\not\equiv 0$
 in $\R^n\setminus\Omega$,
 one has $u\notin \mathcal{X}_0^{1,p}(\Omega)$ (even if $u\in W^{1,p}(\R^n)$).
 \medskip
 
 Although they \emph{do not coincide}, the spaces
 $\mathcal{X}_0^{1,p}(\Omega)$ and $W^{1,p}_0(\Omega)$
 are re\-la\-ted: to be more precise, using \cite[Proposition\,9.18]{Brezis}
 and taking into account the definition
 of $\mathcal{X}_0^{1,p}(\Omega)$, one can see that
 \begin{itemize}
  \item[(i)] if $u\in W_0^{1,p}(\Omega)$, then 
  $u\cdot\mathbf{1}_\Omega\in \mathcal{X}_0^{1,p}(\Omega)$;
  \vspace*{0.1cm}
  
  \item[(ii)] if $u\in \mathcal{X}_0^{1,p}(\Omega)$, then
  $u\big|_{\Omega}\in W_0^{1,p}(\Omega)$.
 \end{itemize}
 Moreover, we can actually \emph{characterize} $\mathcal{X}_0^{1,p}(\Omega)$ as follows:
 $$\mathcal{X}_0^{1,p}(\Omega) = \{u\in W^{1,p}(\R^n):\,
 \text{$u\big|_\Omega\in W^{1,p}_0(\Omega)$ and $u = 0$ a.e.\,in $\R^n\setminus\Omega$}\}.$$
 The main issue in trying to use (i)-(ii) to identify
 $W_0^{1,p}(\Omega)$ with $\mathcal{X}_0^{1,p}(\Omega)$ is that,
 if $u$ is \emph{globally defined} and $u\in W^{1,p}(\R^n)$, then
 $$u\big|_{\Omega}\in W^{1,p}_0(\Omega)\,\,\Rightarrow\,\,u\cdot \mathbf{1}_\Omega\in 
 \mathcal{X}_0^{1,p}(\Omega);$$
 however, we cannot say (in general) that
 $u\neq u\cdot\mathbf{1}_\Omega$. 
 Even if they cannot allow to identify
 $\mathcal{X}_0^{1,p}(\Omega)$ with $W_0^{1,p}(\Omega)$, 
 assertions (i)-(ii) can be used to deduce several
 properties of the space
 $\mathcal{X}_0^{1,p}(\Omega)$ starting from their
 analog in $W_0^{1,p}(\Omega)$; for example, we have
 the following fact, which shall be used in the what follows:
 $$u\in \mathcal{X}_0^{1,p}(\Omega)
 \,\,\Rightarrow\,\,|u|,\,u^+,\,u^-\in \mathcal{X}_0^{1,p}(\Omega).$$
 \begin{remark} \label{rem:Omegaregular}
  In the particular case when the open set $\Omega$ is of class $C^1$, 
  it follows from \cite[Proposition\,9.18]{Brezis} that, if $u\in W^{1,p}(\R^n)$
  and $u = 0$ a.e.\,in $\R^n\setminus\Omega$, then
  $$u\big|_{\Omega}\in W_0^{1,p}(\Omega).$$
  As a consequence, we have
  $$\mathcal{X}_0^{1,p}(\Omega) =
  \{u\in W^{1,p}(\Omega):\,\text{$u = 0$ a.e.\,in $\R^n\setminus\Omega$}\}.$$
  This fact shows that, when $\Omega$ is sufficiently regular,
  $\mathcal{X}_0^{1,p}(\Omega)$ \emph{coincides}
  with the space $\mathbb{X}_p(\Omega)$ introduced in
  \cite{BDVV} (for $p = 2$) and in \cite{BMV} (for a general $p > 1$).
 \end{remark}
 For future reference, we introduce the following set
 \begin{equation}  \label{eq:defM}
  \mathcal{M}(\Omega) := \bigg\{u\in\mathcal{X}_0^{1,p}(\Omega):\,
 \int_{\R^n}|u|^p\,d x = 1\bigg\}.
 \end{equation}
 
 After these preliminaries, we can turn our attention
 to the \emph{Dirichlet pro\-blem} for the operator
 $\LL_{p,s}$. 
 Throughout the rest of this paper, to simplify the notation we set
 \begin{equation} \label{eq:defJp}
  J_p(t) := |t|^{p-2}t\qquad {\mbox{ for all }}t\in\R.
 \end{equation}
 Moreover, we define
 $$p^* := \begin{cases}
 \dfrac{np}{n-p} & \text{if $p < n$}, \\
 +\infty & \text{if $p\geq n$},
 \end{cases}\quad\text{and}\quad
 (p^*)' := \begin{cases}
 \dfrac{p^*}{p^*-1} & \text{if $p < n$}, \\
 1 & \text{if $p\geq n$}.
 \end{cases}
 $$
 \begin{definition} \label{def:weaksol}
  Let $q\geq (p^*)'$, and let
  $f\in L^q(\Omega)$. We say that a function~$u\in W^{1,p}(\R^n)$ is a 
  \emph{weak solution}
  to the equation
  \begin{equation} \label{eq:mainPDE}
  \LL_{p,s}u = f \qquad\text{in $\Omega$}
 \end{equation}
 if, for every
  $\phi\in \mathcal{X}_0^{1,p}(\Omega)$, the following identity is satisfied
  \begin{equation} \label{eq:defweaksol}
  \begin{split}
   & \int_{\Omega}|\nabla u|^{p-2}\langle\nabla u, \nabla \phi\rangle\,d x
   \\
   & \qquad\qquad
   + \iint_{\R^{2n}}\frac{J_p(u(x)-u(y))(\phi(x)-\phi(y))}{|x-y|^{n+ps}}\,dx\,dy
   = \int_{\Omega}f\phi\,d x,
   \end{split}
  \end{equation}
  Moreover, given any $g\in W^{1,p}(\R^n)$, we say that a function
  $u\in W^{1,p}(\R^n)$ is a weak solution to the 
  \emph{$(\LL_{p,s})$-Dirichlet
  problem}
  \begin{equation} \label{eq:DirPbfg}
   \begin{cases}
   \LL_{p,s}u = f & \text{in $\Omega$}, \\
   u = g & \text{in $\R^n\setminus\Omega$}, 
   \end{cases}
  \end{equation}
  if $u$ is a weak solution to \eqref{eq:mainPDE} and, in addition,
  $$u- g\in\mathcal{X}_0^{1,p}(\Omega).$$
  \end{definition}
  \begin{remark} \label{rem:welldef}
  (1)\,\,We point out that the above definition is well-posed: indeed, if~$u,v\in W^{1,p}(\Omega)$, by H\"older's inequality
  and \cite[Proposition\,2.2]{DRV} we get
  \begin{align*}
   & \iint_{\R^{2n}}\frac{|u(x)-u(y)|^{p-1}|v(x)-v(y)|}{|x-y|^{n+ps}}\,dx\,d y
   \ \\
   & \qquad 
   \leq
   \bigg(\iint_{\R^{2n}}\frac{|u(x)-u(y)|^{p}}{|x-y|^{n+ps}}\,dx\,d y
   \bigg)^{1/p}
   \bigg(\iint_{\R^{2n}}\frac{|v(x)-v(y)|^{p}}{|x-y|^{n+ps}}\,dx\,d y
   \bigg)^{1/p} \\[0.2cm]
   & \qquad
   \leq \mathbf{c}\,\|u\|_{W^{1,p}(\R^n)}\,\|v\|_{W^{1,p}(\R^n)} < +\infty.
  \end{align*}
  Moreover, since $f\in L^q(\Omega)$ and $q\geq (p^*)'$, again by H\"older's
  inequality and by the Sobolev Embedding Theorem (applied
  here to $v\in W^{1,p}(\R^n)$), we have
  \begin{align*}
   \int_{\Omega}|f||v|\, dx \leq \|f\|_{L^{(p^*)'}(\Omega)}\,\|v\|_{L^{p^*}(\Omega)}
   <+\infty.
  \end{align*}
  
  (2)\,\,If $W^{1,p}(\R^n)$ is 
  is a weak solution to the 
  {$(\LL_{p,s})$-Dirichlet
  problem} \eqref{eq:DirPbfg}, it follows from the definition
  of $\mathcal{X}_0^{1,p}(\Omega)$ that
  $$(u-g)\big|_{\Omega}\in W^{1,p}_0(\Omega)\qquad\text{and}\qquad
  \text{$u = g$ a.e.\,in $\R^n\setminus\Omega$}.$$
  Thus, $\mathcal{X}_0^{1,p}(\Omega)$ is the `right space' for the weak formulation
  of \eqref{eq:DirPbfg}.
 \end{remark}
 With Definition \ref{def:weaksol} at hand, we now introduce the notion
 of Dirichlet ei\-gen\-va\-lue/ei\-gen\-fun\-ction
 for the operator $\LL_{p,s}$.
 \begin{definition} \label{def:eigenval} 
 We say that $\lambda\in\R$ is a \emph{Dirichlet eigenvalue}
 for $\LL_{p,s}$ if there exists a solution
 $u\in W^{1,p}(\Omega)\setminus\{0\}$ of the
 $(\LL_{p,s})$-Dirichlet problem
 \begin{equation} \label{eq:DirPbEigen}
  \begin{cases}
  \LL_{p,s}u = \lambda|u|^{p-2}u & \text{in $\Omega$}, \\
  u = 0 & \text{in $\R^n\setminus\Omega$}.
  \end{cases}
 \end{equation}
 In this case, we say that $u$ is an \emph{eigenfunction} associated with $\lambda$.
 \end{definition}
 \begin{remark} \label{rem:defwellposedEigenval}
  We point out that Definition 
  \ref{def:eigenval} is {well-posed}. Indeed,
  if $u$ is any function in $W^{1,p}(\R^n)$, 
  by the Sobolev Embedding Theorem we have
  $$f := |u|^{p-2}u\in L^{\frac{p^*}{p-1}}(\Omega);$$
  then, a direct computation shows that $q := p^*/(p-1)\geq (p^*)'$. As a consequence,
  the notion of weak solution 
  for \eqref{eq:DirPbEigen} agrees with the one contained in Definition~\ref{def:weaksol}. 
  In particular, if $u$ is an 
  {eigenfunction} associated with some eigenvalue $\lambda$, then
  $$u\in\mathcal{X}_0^{1,p}(\Omega),$$
  and thus $u\big|_\Omega\in W_0^{1,p}(\Omega)$ and $u = 0$ a.e.\,in $\R^n\setminus\Omega$.  
 \end{remark}
 After these definitions, we close 
 the section by reviewing some results
 about eigenvalues/eigenfucntions for $\LL_{p,s}$
 which shall be used here below.
 \medskip
 
 To begin with, we recall the following result proved in \cite{BMV} which establishes the existence
 of the smallest eigenvalue and detects its basic properties.
 \begin{proposition}[\protect{\cite[Proposition\,5.1]{BMV}}] \label{prop.BMV}
 The smallest ei\-gen\-va\-lue 
 $\lambda_1(\Omega) $ for the operator $\LL_{p,s}$ is strictly positive and satisfies the following properties: 
 \begin{enumerate}
  \item $\lambda_1(\Omega)$ is simple;
  \item the eigenfunctions associated with $\lambda_1(\Omega)$ do not change sign in $\R^n$;
  \item every eigenfunction associated to an eigenvalue 
 $$\lambda > \lambda_1(\Omega)$$ 
 is nodal, i.e., sign changing. 
 \end{enumerate}
 Moreover, $\lambda_1(\Omega)$ admits the following variational characterization
 \begin{equation} \label{eq:deflambdavar}
  \lambda_1(\Omega) = \min_{u\in\mathcal{M}(\Omega)}
 \bigg(\int_\Omega|\nabla u|^p\,dx + \iint_{\R^{2n}}
 \frac{|u(x)-u(y)|^p}{|x-y|^{n+ps}}\,dx\,dy\bigg),
 \end{equation}
 where $\mathcal{M}(\Omega)$ is as in \eqref{eq:defM}. The minimum is always attained, and the
 ei\-gen\-fun\-ctions for $\LL_{p,s}$ associated with $\lambda_1(\Omega)$
 are precisely the minimizers in \eqref{eq:deflambdavar}.
 \end{proposition}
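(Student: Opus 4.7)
The plan is to realise $\lambda_1(\Omega)$ as the minimum of the Rayleigh-type energy
$$F(u) := \int_\Omega |\nabla u|^p\,dx + \iint_{\R^{2n}}\frac{|u(x)-u(y)|^p}{|x-y|^{n+ps}}\,dx\,dy$$
over $\mathcal{M}(\Omega)$ via the direct method, and then to deduce the qualitative information about eigenfunctions from the convexity of $a\mapsto |a|^p$ combined with a hidden-convexity or Picone-type inequality adapted to $\LL_{p,s}$.

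First I would establish existence and positivity of $\lambda_1(\Omega)$. Given a minimizing sequence $\{u_j\}\subseteq \mathcal{M}(\Omega)$, the boundedness of $F(u_j)$ together with $\|u_j\|_{L^p(\R^n)}=1$ yields a uniform bound in $W^{1,p}(\R^n)$; extracting a subsequence, $u_j\rightharpoonup u_\infty$ weakly in $W^{1,p}(\R^n)$ and, since $\Omega$ is bounded and $u_j$ vanishes outside $\Omega$, strongly in $L^p(\R^n)$ by Rellich--Kondrachov. Hence $u_\infty\in\mathcal{M}(\Omega)$, and weak lower semicontinuity of both terms of $F$ (they are convex and continuous on $W^{1,p}(\R^n)$, with the nonlocal term handled by Fatou's lemma on difference quotients) gives $F(u_\infty)\leq\liminf_j F(u_j)$. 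Thus a minimizer exists, and $\lambda_1(\Omega)>0$, since $F(u_\infty)=0$ would force $\nabla u_\infty\equiv 0$ on $\Omega$ and hence $u_\infty\equiv 0$ on $\R^n$, contradicting the constraint. A standard Lagrange multiplier computation identifies the Euler--Lagrange equation of this constrained problem with $\LL_{p,s}u_\infty = \lambda_1|u_\infty|^{p-2}u_\infty$, so minimizers are exactly the normalised eigenfunctions at level $\lambda_1(\Omega)$ (the reverse implication follows by testing the eigenvalue equation against the eigenfunction itself).

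For property (2) I would use that $|u|,\,u^\pm\in\mathcal{X}_0^{1,p}(\Omega)$ whenever $u\in\mathcal{X}_0^{1,p}(\Omega)$ and the pointwise inequality $|a-b|^p\geq\bigl||a|-|b|\bigr|^p$, which is strict unless $a$ and $b$ share the same sign. Since $|\nabla u|=|\nabla|u||$ a.e., this yields $F(|u|)\leq F(u)$, with strict inequality whenever $u$ changes sign on a set of positive measure; hence no sign-changing $u$ can minimise. For simplicity (1) I would exploit a hidden-convexity argument: by a strong minimum principle for $\LL_{p,s}$ (available in the mixed setting through the comparison tools developed e.g.\ in \cite{BMV,DPLV1}), every nontrivial non-negative minimizer is strictly positive inside $\Omega$. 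Given two such minimizers $u,v$, set $\sigma_t:=\bigl(tu^p+(1-t)v^p\bigr)^{1/p}$ for $t\in[0,1]$; the map $t\mapsto F(\sigma_t)$ is convex, the local part being handled by the classical Díaz--Sáa inequality and the nonlocal part by its fractional $p$-Laplacian counterpart (see \cite{BP,FRNPAL}). Since $F(\sigma_0)=F(\sigma_1)=\lambda_1(\Omega)$, convexity forces equality along the whole segment, and the cases of equality in Díaz--Sáa then identify $u$ and $v$ up to a multiplicative constant. Property (3) follows: if $u$ is an eigenfunction with $\lambda>\lambda_1(\Omega)$ and does not change sign, the strong minimum principle gives $u>0$ in $\Omega$; a Picone-type inequality applied to the pair $(u,u_1)$, where $u_1>0$ is the first eigenfunction, produces $\lambda\leq\lambda_1(\Omega)$, a contradiction.

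The hardest step is the hidden-convexity/Picone analysis for the nonlocal piece of $\LL_{p,s}$: one must estimate $|\sigma_t(x)-\sigma_t(y)|^p$ in terms of the difference quotients of $u$ and $v$ simultaneously, and extract rigidity from the non-strict part of the convexity using the strict positivity of $u$ and $v$ to rule out degenerate coincidences on sets of positive measure. A secondary, but non-trivial, ingredient is the strong minimum principle for the nonlinear mixed operator itself, which is not a direct consequence of its local or purely nonlocal counterparts and requires a dedicated argument in the setting of the space $\mathcal{X}_0^{1,p}(\Omega)$.
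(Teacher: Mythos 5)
The paper does not prove this statement; it simply cites it as \cite[Proposition~5.1]{BMV}. Your reconstruction is nonetheless essentially the argument used there (the ``Brezis--Oswald approach'' of the cited title), namely: direct method for existence of a minimizer of the Rayleigh quotient, strict convexity of $a\mapsto|a|^p$ with the triangle inequality $\bigl||a|-|b|\bigr|\le|a-b|$ to reduce to non-negative minimizers, hidden convexity along $t\mapsto\bigl(tu^p+(1-t)v^p\bigr)^{1/p}$ (local D\'iaz--S\'a\`a plus its nonlocal counterpart) to get simplicity, and a Picone-type inequality to rule out non-nodal higher eigenfunctions. Your outline is correct; the only points that need care, and that you do at least flag, are (i) the strong minimum principle for $\LL_{p,s}$, which is indeed established in \cite{BMV} and is not inherited automatically from the local or the purely fractional case, and (ii) the admissibility of the Picone test function $u_1^p/u^{p-1}$ near $\partial\Omega$, which has to be handled by the usual $\varepsilon$-regularization $u_1^p/(u+\varepsilon)^{p-1}$ before passing to the limit. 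With those two ingredients made precise the argument is complete and matches the cited proof.
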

 We observe that, on account of Proposition \ref{prop.BMV},
 there exists a \emph{unique non-negative}
 eigenfunction $u_0\in\mathcal{M}(\Omega)\subseteq\mathcal{X}_0^{1,p}(\Omega)$ 
 associated with $\lambda_1(\Omega)$;
 in par\-ti\-cu\-lar, $u_0$ is a minimizer in \eqref{eq:deflambdavar}, so that
 \begin{equation} \label{eq:uzerolambda1}
  \lambda_1(\Omega) =  
  \int_\Omega|\nabla u_0|^p\,dx + \iint_{\R^{2n}}
 \frac{|u_0(x)-u_0(y)|^p}{|x-y|^{n+ps}}\,dx\,dy.
 \end{equation}
 We shall refer to $u_0$ as the \emph{principal eigenfunction} of $\LL_{p,s}$.
 \medskip
 
 The next result was proved in \cite{GoelSreenadh} and concerns the \emph{second}
 eigenvalue for $\LL_{p,s}$.
 \begin{theorem}[\protect{\cite[Section\,5]{GoelSreenadh}}]\label{thm:GS}
 We define:
 \begin{equation} \label{eq:deflambda2}
  \lambda_2(\Omega) := 
  \inf_{f\in \mathcal{K}}\max_{u\in \mathrm{Im}(f)}
  \bigg\{\int_{\Omega}|\nabla u|^p\,d x
  + \iint_{\R^{2n}}\frac{|u(x)-u(y)|^p}{|x-y|^{n+ps}}\,dx\,dy\bigg\}, 
 \end{equation}
 where $\mathcal{K} := \{f:S^1\to\mathcal{M}(\Omega):\,\text{$f$ is continuous and odd}\}$, with~$\mathcal{M}(\Omega)$ as in~\eqref{eq:defM}.
 
  Then:
 \begin{enumerate}
 \item $\lambda_2(\Omega)$ is an eigenvalue for $\LL_{p,s}$;
 \item $\lambda_2 (\Omega) > \lambda_1(\Omega)$;
 \item If $\lambda > \lambda_1(\Omega)$ is an 
 eigenvalue for $\LL_{p,s}$, then $\lambda \geq \lambda_2(\Omega)$.
 \end{enumerate}
 \end{theorem}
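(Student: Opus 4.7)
The natural framework is constrained variational calculus on the $C^1$-manifold $\mathcal{M}(\Omega)$. Writing
$$\Phi(u) := \int_\Omega |\nabla u|^p\,dx + \iint_{\R^{2n}}\frac{|u(x)-u(y)|^p}{|x-y|^{n+ps}}\,dx\,dy,$$
one checks that $\Phi \in C^1(\mathcal{X}_0^{1,p}(\Omega);\R)$ and that, by a Lagrange-multiplier computation, $u \in \mathcal{M}(\Omega)$ is a constrained critical point of $\Phi|_{\mathcal{M}(\Omega)}$ at level $c$ if and only if $u$ is a weak eigenfunction of $\LL_{p,s}$ with eigenvalue $c$. The class $\mathcal{K}$ is $\mathbb{Z}_2$-equivariant (antipodal action on $S^1$, sign change on $\mathcal{M}(\Omega)$), so the problem fits the abstract odd min-max theorem of Ljusternik--Schnirelman type.

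For assertion (1), the plan is to apply the equivariant deformation lemma, whose key hypothesis is the Palais--Smale condition for $\Phi|_{\mathcal{M}(\Omega)}$. Given a sequence $(u_k) \subset \mathcal{M}(\Omega)$ with $\Phi(u_k) \to c$ and tangential gradient tending to $0$, boundedness in $\mathcal{X}_0^{1,p}(\Omega)$ follows because $\Phi(u_k)$ controls the full Gagliardo seminorm plus the Dirichlet energy; one then passes to a subsequence weakly convergent to some $u$, strong $L^p$-convergence follows from Sobolev/Rellich compactness (so $u \in \mathcal{M}(\Omega)$), and upgrading to strong convergence in $\mathcal{X}_0^{1,p}$ uses the classical $(S_+)$-property for both $-\Delta_p$ and $(-\Delta)^s_p$ (monotonicity of $J_p$, together with the Simon-type inequalities for $p \geq 2$). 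With PS in hand, the standard argument shows that if $\lambda_2$ were a regular value, any nearly optimal $f \in \mathcal{K}$ could be equivariantly deformed so as to strictly lower its maximum energy below $\lambda_2$, contradicting the definition.

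For assertion (2), I would argue by contradiction, assuming $\lambda_2(\Omega) = \lambda_1(\Omega)$. Pick a minimizing sequence $(f_k) \subset \mathcal{K}$ with $M_k := \max_{z \in S^1}\Phi(f_k(z)) \to \lambda_1(\Omega)$; by the variational characterization \eqref{eq:deflambdavar}, $\Phi(f_k(z)) \geq \lambda_1(\Omega)$ pointwise, so $\Phi(f_k(z)) \to \lambda_1(\Omega)$ uniformly in $z$. Every point $f_k(z)$ is thus an approximate minimizer in \eqref{eq:deflambdavar}; by simplicity (Proposition~\ref{prop.BMV}(1)) and the PS analysis of the previous paragraph, the set of minimizers is exactly $\{\pm u_0\}$, and for small $\delta > 0$ the neighborhood $U_\delta := \{u \in \mathcal{M}(\Omega) : \mathrm{dist}_{\mathcal{X}_0^{1,p}}(u,\{\pm u_0\}) < \delta\}$ has exactly two connected components. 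For $k$ large the continuous image $f_k(S^1)$ would be forced into $U_\delta$, yet must be connected and antipodally symmetric (since $f_k$ is odd and $u_0 \neq -u_0$), which is impossible because no connected component of $U_\delta$ is preserved by $u \mapsto -u$. This yields the desired contradiction, so $\lambda_2(\Omega) > \lambda_1(\Omega)$.

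For assertion (3), suppose $\lambda > \lambda_1(\Omega)$ is an eigenvalue with eigenfunction $v \in \mathcal{M}(\Omega)$. By Proposition~\ref{prop.BMV}(3), $v$ is nodal, i.e.\ $v^+, v^- \not\equiv 0$, and by the closure properties of $\mathcal{X}_0^{1,p}(\Omega)$ both $v^\pm$ lie in this space. Parametrizing the upper half-circle by $\theta \in [0,\pi]$, set
$$f(\theta) := \frac{\cos\theta\,\tfrac{v^+}{\|v^+\|_{L^p}} + \sin\theta\,\tfrac{v^-}{\|v^-\|_{L^p}}}{\bigl\|\cos\theta\,\tfrac{v^+}{\|v^+\|_{L^p}} + \sin\theta\,\tfrac{v^-}{\|v^-\|_{L^p}}\bigr\|_{L^p(\R^n)}},$$
extended to the lower half by $f(-\theta) := -f(\theta)$; this gives a well-defined element of $\mathcal{K}$ because $v^+$ and $v^-$ have disjoint supports, so the $L^p$-denominator is uniformly bounded below. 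It suffices to show $\Phi(f(\theta)) \leq \lambda$ for every $\theta$, which then contradicts the definition of $\lambda_2(\Omega)$. Testing the weak formulation \eqref{eq:defweaksol} of $\LL_{p,s}v = \lambda|v|^{p-2}v$ against $v^+$ and against $v^-$ separately (both admissible by Remark~\ref{rem:defwellposedEigenval}) gives explicit identities for the local and diagonal parts of $\Phi(v^\pm)$; the nontrivial step is to bound the mixed interactions in the fractional seminorm through the pointwise convexity/monotonicity estimate $J_p(a-b)(a-b) \geq J_p(a^+ - b^+)(a^+ - b^+) + J_p(a^- - b^-)(a^- - b^-)$ for $p \geq 2$. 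This is precisely the main technical obstacle: unlike the purely local setting, the fractional seminorm couples the two supports and forces one to exploit a Brasco--Parini-style ``nodal decomposition'' inequality to compare $\Phi(f(\theta))$ with the combination of $\Phi(v^+)$ and $\Phi(v^-)$, and finally to conclude $\Phi(f(\theta)) \leq \lambda$.
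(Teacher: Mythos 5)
First, a structural remark: the paper does not prove this theorem at all --- it is quoted from \cite[Section 5]{GoelSreenadh} (which adapts the scheme of \cite{BP} for the fractional $p$-Laplacian), so there is no internal proof to compare with. Your outline is, in broad strokes, the same standard scheme used in that literature: Palais--Smale via an $(S_+)$-type argument plus an equivariant deformation for assertion (1); for assertion (2), the argument that near the level $\lambda_1(\Omega)$ the sphere $\mathcal{M}(\Omega)$ decomposes, by simplicity of $\lambda_1$ (Proposition~\ref{prop.BMV}) and compactness, into two disjoint neighborhoods of $\pm u_0$ swapped by the antipodal map, which is incompatible with a connected, symmetric image $f_k(S^1)$; and, for assertion (3), an odd map built from $v^+$ and $v^-$ of a sign-changing eigenfunction inserted into \eqref{eq:deflambda2}. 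Parts (1) and (2) are sketched at the right level of detail.

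In part (3), however, there are two genuine gaps. The first is the construction of the map: as written (formula on the closed upper half-circle, then $f(-\theta):=-f(\theta)$) it is neither continuous nor odd --- it jumps at the point $\theta=0$ (limits $v^+/\|v^+\|_{L^p}$ and $-v^+/\|v^+\|_{L^p}$ from the two sides), and oddness would force the false symmetry $f(\pi-\theta)=f(\theta)$. The standard fix is to define the map globally on $S^1$ by $(\omega_1,\omega_2)\mapsto(\omega_1 v^+-\omega_2 v^-)/\|\omega_1 v^+-\omega_2 v^-\|_{L^p(\R^n)}$: the numerator is linear in $(\omega_1,\omega_2)$ and never vanishes because $v^\pm$ have disjoint supports, so the map is odd and continuous (compare with the map $f(z_1,z_2)$ used in Step II of the proof of Theorem~\ref{main:thm} in the paper, which is defined on all of $S^1$ at once for exactly this reason). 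The second, more substantive, gap is the key inequality: the estimate you quote, $|a-b|^p\ge|a^+-b^+|^p+|a^--b^-|^p$, is true but useless here, since it carries no dependence on the coefficients $(\omega_1,\omega_2)$ and cannot yield the required bound $\Phi(f(\omega))\le\lambda\,\|\omega_1 v^+-\omega_2 v^-\|_{L^p}^p$. What the argument needs, after testing the equation with $v^+$ and with $-v^-$, is the weighted nodal inequality of Brasco--Parini type, roughly $|\omega_1(a^+-b^+)-\omega_2(a^--b^-)|^p\le|\omega_1|^p\,J_p(a-b)(a^+-b^+)-|\omega_2|^p\,J_p(a-b)(a^--b^-)$ (for the appropriate sign ranges of $\omega_1,\omega_2$), which, once integrated against the kernel and combined with the exact splitting of the gradient term on disjoint supports, gives $\Phi(f(\omega))\le\lambda$ and hence $\lambda_2(\Omega)\le\lambda$ directly from the infimum in \eqref{eq:deflambda2} (no contradiction is needed). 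You explicitly defer this step as ``the main technical obstacle,'' but it is precisely the heart of assertion (3), so as it stands that part of your proof is incomplete; the missing lemma is exactly the kind of elementary pointwise estimate the paper records in Lemma~\ref{lem:algebrico} and that \cite{BP} and \cite{GoelSreenadh} prove and use at this point.
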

In the rest of this paper, we shall refer to $\lambda_1(\Omega)$ and $\lambda_2(\Omega)$
 as, respectively, the \emph{first and second eigenvalue}
 of $\LL_{p,s}$ (in $\Omega$). We notice that,
 as a con\-se\-que\-nce of~\eqref{eq:deflambdavar}-\eqref{eq:deflambda2}, 
 both $\lambda_1(\cdot)$ and $\lambda_2(\cdot)$ are \emph{translation-invariant},
 that is,
 $$\lambda_1(x_0+\Omega) = \lambda_1(\Omega)\qquad{\mbox{ and }} \qquad
 \lambda_2(x_0+\Omega) = \lambda_2(\Omega).$$
 To proceed further, we now recall the following \emph{global boundedness}
 result for the ei\-gen\-fun\-ctions of
 $\LL_{p,s}$ (associated with \emph{any}
 eigenvalue $\lambda$) established in \cite{BMV}.
 \begin{theorem}[\protect{\cite[Theorem\,4.4]{BMV}}]\label{thm:GlobalBd}
  Let $u\in 
  \mathcal{X}_0^{1,p}(\Omega)\setminus\{0\}$ be an eigenfunction for~$\LL_{p,s}$, as\-so\-cia\-ted
  with an eigenfunction $\lambda \geq \lambda_1(\Omega)$. Then, 
  $u\in L^\infty(\R^n)$.
 \end{theorem}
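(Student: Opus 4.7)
The plan is to perform a Moser-type iteration adapted to the mixed local/nonlocal framework. Since $u\in\mathcal{X}_0^{1,p}(\Omega)$ satisfies $u\equiv 0$ a.e.\ in $\R^n\setminus\Omega$, it suffices to prove $\|u\|_{L^\infty(\Omega)}<+\infty$; I will focus on $u^+$, the argument for $u^-$ being symmetric. I start from the weak formulation \eqref{eq:defweaksol} of the eigenvalue equation, specialised with $f=\lambda J_p(u)\in L^{p^*/(p-1)}(\Omega)$ (see Remark~\ref{rem:defwellposedEigenval}), and test it against suitable truncated powers of $u^+$.

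Concretely, for $\beta\geq 0$ and $T>0$, I set $u_T:=\min(u^+,T)$ and take $\phi:=u^+\,u_T^{p\beta}$; using the closure properties of $\mathcal{X}_0^{1,p}(\Omega)$ recorded before Remark~\ref{rem:Omegaregular} (together with the boundedness of the truncation), one checks that $\phi\in\mathcal{X}_0^{1,p}(\Omega)$. The local term produces, via the standard $p$-Laplace algebra, a lower bound of the form
\[
c_p(\beta+1)^{1-p}\int_\Omega\bigl|\nabla(u^+u_T^\beta)\bigr|^p\,dx.
\]
The crucial step is to show that the nonlocal double integral involving $J_p(u(x)-u(y))$ also contributes non-negatively, up to a controllable error. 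I will achieve this via a pointwise algebraic inequality of the type
\[
J_p(a-b)\bigl(aA^{p}-bB^{p}\bigr)\geq \frac{1}{C_p}\bigl|aA-bB\bigr|^{p}-\mathcal{R}(a,b,A,B),
\]
applied with $a:=u(x)$, $b:=u(y)$, $A:=u_T^\beta(x)$, $B:=u_T^\beta(y)$; this is the nonlinear analogue of the discrete chain rule for the fractional $p$-Laplacian developed in the recent literature. Since $u_T^\beta$ is bounded by $T^\beta$, the remainder $\mathcal{R}$ is controlled by the Gagliardo seminorm of $u^+$, which is finite because $u\in W^{1,p}(\R^n)$.

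Combining these ingredients, letting $T\to+\infty$ by monotone convergence, and applying the Sobolev embedding $\mathcal{X}_0^{1,p}(\Omega)\hookrightarrow L^{p^*}(\Omega)$ (with the obvious modification using any exponent $q>p$ when $p\geq n$), I obtain an iterative inequality schematically of the form
\[
\bigl\|(u^+)^{\beta+1}\bigr\|_{L^{p^*}(\Omega)}^{p}\leq C_1(\beta+1)^{p}\,\lambda\,\bigl\|(u^+)^{\beta+1}\bigr\|_{L^{p}(\Omega)}^{p}.
\]
A standard Moser iteration along the sequence $\beta_{k+1}+1=\tfrac{n}{n-p}(\beta_k+1)$, starting from $u^+\in L^{p^*}(\Omega)$, then yields $u^+\in L^\infty(\R^n)$ together with a quantitative bound depending only on $\|u\|_{L^{p^*}(\Omega)}$, $\lambda$, $p$, $s$ and $n$. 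The main obstacle is precisely the algebraic lemma for the nonlocal term above: striking the correct balance between the Gagliardo-type contribution to be absorbed on the left and the remainder $\mathcal{R}$ that feeds back into the iteration requires a careful case-split based on the sign and relative size of $u(x)$ and $u(y)$, and this is the technical heart of the argument.
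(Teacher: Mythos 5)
The paper does not give its own proof of this statement: it is imported verbatim from Theorem~4.4 of the companion work \cite{BMV}, and the only in-text commentary is Remark~\ref{rem:proofBd}, which records that \cite[Theorem~4.4]{BMV} proves global boundedness of \emph{non-negative} weak solutions of $\LL_{p,s}u=f(x,u)$ under a growth bound on $f$, and that one can scrutinize that proof and run it on $u^+$ and $u^-$ separately to cover sign-changing eigenfunctions. So there is no internal proof to compare against in the strict sense; the relevant benchmark is whether your sketch is a correct realization of that suggested adaptation.

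Judged on that basis, your Moser-type iteration is a legitimate route and it in fact matches the spirit of what Remark~\ref{rem:proofBd} describes: you treat $u^\pm$ symmetrically, test the weak form with $\phi=u^+u_T^{p\beta}$ (which does lie in $\mathcal{X}_0^{1,p}(\Omega)$ by the closure properties listed before Remark~\ref{rem:Omegaregular}), derive a reverse H\"older inequality via the Sobolev embedding, and iterate along $\beta_{k+1}+1=\tfrac{n}{n-p}(\beta_k+1)$. Two points deserve care. First, the exponent on the local-term lower bound should be $(\beta+1)^{-p}$ rather than $(\beta+1)^{1-p}$ if you want it to literally match $\int|\nabla(u^+u_T^\beta)|^p$; this is harmless for the convergence of the iteration but worth fixing. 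Second, and more substantively, the algebraic inequality you invoke for the nonlocal term with a remainder $\mathcal{R}(a,b,A,B)$ is nonstandard and vaguer than what you actually need. The version that makes Moser iteration for the fractional $p$-Laplacian go through cleanly is the remainder-free discrete chain rule (for nondecreasing $g$, $J_p(a-b)\big(g(a)-g(b)\big)\geq \big|G(a)-G(b)\big|^p$ with $G'=(g')^{1/p}$), applied with $g(t)=t_+\min(t_+,T)^{p\beta}$, together with the simple observation that for sign-changing $u$ the cross terms where $u(x)u(y)\le 0$ contribute with the right sign. As written, the remainder-based formulation leaves exactly the step you flag as ``the technical heart'' unproved, so while the overall plan is correct and consistent with the paper's intent, you should replace that ad hoc inequality with the standard discrete chain rule and verify the sign of the mixed-sign contribution explicitly.
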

 \begin{remark} \label{rem:proofBd}
  Actually, in \cite[Theorem\,4.4]{BMV}
 it is proved the global
 bound\-ed\-ness of any \emph{non-negative} weak solution to the more general Dirichlet problem
 $$\begin{cases}
  \LL_{p,s} = f(x,u) & \text{in $\Omega$}, \\
  u \equiv 0 & \text{a.e.\,in $\R^n\setminus\Omega$},
  \end{cases}$$
  where $f:\Omega\times\R\to\R$ is a Carath\'{e}odory function satisfying the properties
  \begin{itemize}
   \item[(a)] $f(\cdot,t)\in L^\infty(\Omega)$ for every $t\geq 0$;
   \item[(b)]  There exists a constant $c_p > 0$ such that
   $$|f(x, t)| \leq c_p(1+t^{p-1})\qquad\text{for a.e.\,$x\in\Omega$ and every $t\geq 0$}.$$
  \end{itemize}
  However, by scrutinizing the proof of the theorem, it is easy
  to check that the same argument
  can be applied to our context, where we have
  $$f(x,t) = \lambda|t|^{p-2}t\qquad
  {\mbox{ for all }} x\in\Omega {\mbox{ and }} t\in\R,$$
  but we do not make any assumption on the sign of $u$ 
  (see also \cite[Proposition\,4]{SerVal2}).
 \end{remark}
 Finally, we state here an algebraic lemma which shall be useful in the 
 forth\-co\-ming computations.
 \begin{lemma}\label{lem:algebrico}
 Let $1<p<+\infty$ be fixed. Then, the following facts hold.
 \begin{enumerate}
  \item For every $a,b\in \mathbb{R}$ 
 such that $ab\leq 0$, it holds that
 \begin{equation*}
 J_p(a-b)a \geq 
 \begin{cases}
 |a|^p - (p-1)|a-b|^{p-2}ab, & \text{if $1<p\leq 2$}, \\[0.1cm]
 |a|^p - (p-1)|a|^{p-2}ab, & \text{if $p> 2$}.
 \end{cases}
 \end{equation*}
 \item There exists a constant $c_p > 0$ such that
 $$|a-b|^p \leq |a|^p+|b|^p + c_p\big(|a|^2+|b|^2\big)^{\frac{p-2}{2}}|ab|,\qquad
 \forall\,\,a,b\in\R.$$
 \end{enumerate}
 \end{lemma}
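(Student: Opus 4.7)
The plan is to prove both parts by elementary convexity/concavity arguments, splitting each according to the sign of $ab$ and, in the nontrivial subcase, reducing to a one-variable estimate for the power function $t\mapsto t^{p-1}$ on $[0,+\infty)$.

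For part~(1), I would first observe that both sides of the inequality are invariant under the reflection $(a,b)\mapsto(-a,-b)$, which also preserves the constraint $ab\leq 0$; hence without loss of generality $a\geq 0$ and $b\leq 0$. In this regime $a-b=a+|b|$, so $J_p(a-b)\,a=a(a+|b|)^{p-1}$, and the target inequality, after dividing through by $a$ (the case $a=0$ being trivial), reduces to
\[
(a+|b|)^{p-1}\ \geq\ a^{p-1}+(p-1)\,a^{p-2}\,|b|\qquad\text{if } p> 2,
\]
or
\[
(a+|b|)^{p-1}\ \geq\ a^{p-1}+(p-1)\,(a+|b|)^{p-2}\,|b|\qquad\text{if } 1<p\leq 2.
\]
The first is the first-order Taylor inequality for the convex function $t\mapsto t^{p-1}$ at the base point $t=a$; the second is the analogous tangent-line estimate for the concave function $t\mapsto t^{p-1}$, now with base point at $t=a+|b|$. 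Both follow at once from the sign of $p-2$.

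For part~(2), I would separate the case $ab\geq 0$, in which $|a-b|\leq\max(|a|,|b|)$ gives immediately $|a-b|^{p}\leq|a|^{p}+|b|^{p}$ (and hence any positive $c_p$ works), from the case $ab\leq 0$. In the latter, setting $u:=|a|$ and $v:=|b|$ one has $|a-b|=u+v$ and $|ab|=uv$, so the claim amounts to
\[
(u+v)^{p}-u^{p}-v^{p}\ \leq\ c_{p}\,(u^{2}+v^{2})^{(p-2)/2}\,uv\qquad\text{for every } u,v\geq 0.
\]
The natural starting point is the identity
\[
(u+v)^{p}-u^{p}-v^{p}=p\int_{0}^{v}\big[(u+t)^{p-1}-t^{p-1}\big]\,dt,
\]
combined with the mean value theorem: $(u+t)^{p-1}-t^{p-1}=(p-1)\,\xi^{p-2}\,u$ for some $\xi\in(t,u+t)$, which is bounded above by $(p-1)(u+t)^{p-2}u$ when $p\geq 2$ (monotone increasing $\xi\mapsto\xi^{p-2}$) and by $(p-1)\,t^{p-2}u$ when $1<p<2$ (monotone decreasing). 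Integration followed by the elementary size comparisons $(u+v)^{2}\leq 2(u^{2}+v^{2})$ and $u^{2}+v^{2}\leq 2\max(u,v)^{2}$ converts these bounds into the required form, with an explicit $c_p$ of the shape $C(p)\,2^{|p-2|/2}$.

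The only genuinely delicate point is the case $1<p<2$: the factor $(u^{2}+v^{2})^{(p-2)/2}$ now blows up near the origin, and the crude bound $(u+t)^{p-2}\leq\max(u,v)^{p-2}$ goes in the \emph{wrong} direction. To circumvent this, I would use the two symmetric estimates $F(u,v)\leq p\,u\,v^{p-1}$ and $F(u,v)\leq p\,v\,u^{p-1}$ produced by the above integration, keep whichever is smaller (say $u\geq v$, giving $p\,u^{p-1}v$), and then compare with the right-hand side through $(u^{2}+v^{2})^{(p-2)/2}\geq(2u^{2})^{(p-2)/2}=2^{(p-2)/2}u^{p-2}$, which yields $c_p=p\,2^{(2-p)/2}$. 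Alternatively, since both sides of the reduced inequality are homogeneous of degree $p$, one can restrict the check to the compact arc $\{u^{2}+v^{2}=1,\ u,v\geq 0\}$, where the continuity of the ratio combined with the first-order vanishing of the numerator at each endpoint (ensured by $p>1$) guarantees a finite supremum, which serves as $c_{p}$.
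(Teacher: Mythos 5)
The paper states Lemma~\ref{lem:algebrico} without proof (it is used as a utility lemma, of a type standard in the fractional $p$-Laplacian literature, cf.\ Brasco--Parini), so there is no in-text argument to compare with; I have therefore checked your proposal on its own terms, and it is correct. In part~(1), the reflection $(a,b)\mapsto(-a,-b)$ and division by $a>0$ reduce both branches precisely to the tangent-line inequality for $t\mapsto t^{p-1}$ (convex when $p>2$, with base point $a$; concave when $1<p\le 2$, with base point $a+|b|$), which is exactly the right mechanism. In part~(2), the case $ab\ge 0$ is trivially absorbed by $|a-b|\le\max(|a|,|b|)$, and for $ab\le 0$ the integral identity $(u+v)^p-u^p-v^p=p\int_0^v\bigl[(u+t)^{p-1}-t^{p-1}\bigr]\,dt$ together with the mean-value bound $\xi^{p-2}\le(u+t)^{p-2}$ (monotone increasing, $p\ge 2$) or $\xi^{p-2}\le t^{p-2}$ (monotone decreasing, $1<p<2$) produces $F(u,v)\le p(p-1)uv(u+v)^{p-2}$ resp.\ $F(u,v)\le p\,u\,v^{p-1}$; the subsequent size comparisons $(u+v)^2\le 2(u^2+v^2)$ and $u^2+v^2\le 2\max(u,v)^2$ are applied with the correct orientation given the sign of $\tfrac{p-2}{2}$, yielding the admissible constants you indicate. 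You correctly flag and resolve the only delicate point, namely that for $1<p<2$ one must pick the \emph{smaller} of the two symmetric one-sided bounds before comparing with $(u^2+v^2)^{(p-2)/2}$, since that exponent is negative. The homogeneity/compactness alternative at the end is also a valid fallback, since the numerator vanishes to first order at the endpoints of the arc while $uv$ vanishes to exactly first order there, so the ratio extends continuously.
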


 \section{Interior regularity of the eigenfunctions} \label{INTRERSEC}
 
 In this section we prove the \emph{interior H\"older regularity}
 of the eigenfunctions for~$\LL_{p,s}$, which is
 a fundamental ingredient for the proof of
 Theorem \ref{main:thm}.
 As a matter of fact, on account of Theorem \ref{thm:GlobalBd}, we establish
 the interior H\"older regularity for any \emph{bounded} weak solution
 of the non-homogeneous equation
 \eqref{eq:mainPDE}, when 
 $$f\in L^\infty(\Omega).$$
  In what follows, we tacitly understand that 
  $$\text{$2\leq p \leq n$ and $s\in (0,1)$};$$
  mo\-re\-o\-ver, $\Omega\subseteq\R^n$ is a bounded open set and
  $f\in L^\infty(\Omega)$.
  \begin{remark} \label{rem:choicepleqn}
   The reason why we restrict ourselves to consider
   $2\leq p\leq n$ follows from the definition
   of  weak solution to \eqref{eq:mainPDE}. 
   
   Indeed, if $u$ is a weak solution
   to \eqref{eq:mainPDE}, then by definition we have $u\in W^{1,p}(\R^n)$;
   as a consequence, if $p > n$, by the classical Sobolev Embedding Theorem
   we can immediately conclude that $u\in C^{0,\gamma}(\R^n)$, where
   $\gamma = 1-n/p$.
  \end{remark}
  In order to state (and prove) the main result
  of this section, we need to 
  fix a notation:
  for every $z\in\R^n,\,\rho > 0$ and $u\in L^p(\R^n)$, we define
  $$\mathrm{Tail}(u,z,\rho) := \bigg(\rho^p\int_{\R^n\setminus
  B_\rho(z)}\frac{|u|^{p}}{|x-z|^{n+ps}}\,dx\bigg)^{1/p}.$$
  The quantity $\mathrm{Tail}(u,z,\rho)$ 
  is referred to as the \emph{$(\LL_{p,s})$-tail} of $u$, see e.g.~\cite{KMS, AGNE}.
  
  \begin{theorem} \label{thm:mainHolder}
    Let $f\in L^\infty(\Omega)$, and let $u\in W^{1,p}(\R^n)\cap L^\infty(\R^n)$ be
    a weak so\-lu\-tion to \eqref{eq:mainPDE}. Then, there exists
    some $\beta = \beta(n,s,p)\in (0,1)$ such that
    $u\in C^{0,\beta}_{\loc}(\Omega)$.
    \vspace*{0.1cm}
    
    More precisely, for every ball $B_{R_0}(z)\Subset\Omega$ we have the estimate
    \begin{equation} \label{eq:HolderestimMain}
     [u]_{C^{0,\beta}(B_{R_0}(z))}^p
     \leq C\Big(\|f\|_{L^\infty(\Omega)}
     + \|u\|_{L^\infty(\Omega)}^p+
     \mathrm{Tail}(u,z,R_1)^p+1\Big),
    \end{equation}
	where 
	$$R_1 := R_0 + \frac{\mathrm{dist}(B_{R_0}(z),\de\Omega)}{2}$$
	and $C > 0$ is a constant independent of $u$ and~$R_1$.
  \end{theorem}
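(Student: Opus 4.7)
The plan is to adapt the De~Giorgi--Nash--Moser machinery to the mixed operator~$\LL_{p,s}$ in the spirit of the Di~Castro--Kuusi--Palatucci scheme for the fractional $p$-Laplacian, together with the classical Caccioppoli estimate for the $p$-Laplacian. The bound on~$\|u\|_{L^\infty}$ is already available (by assumption, and via Theorem~\ref{thm:GlobalBd} in the eigenfunction case), so the work consists of transferring that boundedness into Hölder continuity through a quantitative oscillation decay. The presence of~$\mathrm{Tail}(u,z,R_1)$ in the estimate is what lets us localize: the far contribution of the nonlocal term is absorbed into that single quantity, while the near contribution combines with the Dirichlet energy.

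First I would derive a mixed Caccioppoli inequality for the truncations~$w_\pm:=(u-k)_\pm$ on concentric balls $B_r(z)\Subset B_R(z)\Subset\Omega$: testing the weak formulation~\eqref{eq:defweaksol} with~$\eta^p\,w_\pm$, where $\eta\in C^\infty_0(B_R(z))$ is a standard cutoff equal to~$1$ on~$B_r(z)$, and using the algebraic inequality of Lemma~\ref{lem:algebrico}(1) to handle the nonlocal discrete-gradient term, yields a control of the form
\begin{equation*}
\int_{B_r(z)}|\nabla w_\pm|^p\,dx + \iint_{B_r(z)\times B_r(z)}\frac{|w_\pm(x)-w_\pm(y)|^p}{|x-y|^{n+ps}}\,dx\,dy \le C\,\Phi(k,R,r),
\end{equation*}
where $\Phi$ encodes $(R-r)^{-p}\int_{B_R}w_\pm^p\,dx$, a term of the form $\|f\|_{L^\infty}\int_{B_R} w_\pm\,dx$, and a $\mathrm{Tail}$-type contribution coming from splitting the nonlocal integral over $B_R\times B_R$ and $B_R\times(\R^n\setminus B_R)$ and estimating the latter via~$\|u\|_{L^\infty}$ and the tail of $w_\pm$.

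Next I would combine this Caccioppoli inequality with the Sobolev embedding~$W^{1,p}\hookrightarrow L^{p^*}$ to set up a De~Giorgi iteration on level sets, yielding the standard \emph{first alternative} (a level reduction lemma): if the measure of~$\{u\ge k\}\cap B_R(z)$ is sufficiently small relative to~$|B_R(z)|$, then~$u$ strictly decreases on~$B_{R/2}(z)$. In parallel, a logarithmic estimate, obtained by testing with~$\eta^p/(u-k+d)^{p-1}$, provides the \emph{second alternative}, namely an expansion of positivity once one has a lower density bound on sub-level sets. Together, these two alternatives yield the oscillation reduction
\begin{equation*}
\operatorname*{osc}_{B_{r/2}(z)}u \le \theta\,\operatorname*{osc}_{B_r(z)}u + C_\star\Bigl(\|f\|_{L^\infty(\Omega)}^{1/(p-1)}\,r^{p/(p-1)} + \mathrm{Tail}(u,z,R_1)\,(r/R_1)^{s}\Bigr)
\end{equation*}
for some $\theta\in(0,1)$ independent of~$r$ and~$u$, valid for all~$r\le R_0$.

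A standard iteration on dyadic radii then converts this geometric oscillation decay into the Hölder estimate~\eqref{eq:HolderestimMain}, with $\beta$ determined by $\theta$, $p$, and $s$ only. I expect the main obstacle to be setting up the Caccioppoli inequality in a way that simultaneously captures both the local gradient energy and the nonlocal Gagliardo seminorm of the truncation, because the cutoff~$\eta$ behaves differently in the two pieces: in the local term it only contributes error through~$|\nabla\eta|$, but in the nonlocal term one has to carefully split $|u(x)\eta(x)^p-u(y)\eta(y)^p|$, absorb the diagonal part into the Gagliardo seminorm of~$w_\pm$, and control the off-diagonal part by both~$\|u\|_{L^\infty}$ and the tail. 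The usual algebraic manipulations (as in~\cite{KMS, AGNE}) applied to the fractional part, combined with Young's inequality to reabsorb terms into the gradient, should close the argument, but the bookkeeping is the delicate point of the proof.
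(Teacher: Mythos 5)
Your proposal takes a genuinely different route from the paper. You propose redoing the De~Giorgi machinery from scratch for the inhomogeneous equation: a mixed Caccioppoli inequality for the truncations $(u-k)_\pm$, both De~Giorgi alternatives (a level-set shrinkage lemma and a logarithmic/expansion-of-positivity estimate), and then an oscillation decay iterated dyadically. The paper instead follows a \emph{harmonic replacement} argument in the style of Brasco--Lindgren--Schikorra~\cite{BLS}: it solves $\LL_{p,s}v=0$ in $B_{3R}(x_0)$ with $v=u$ outside, estimates $[u-v]_{W^{s,p}(\R^n)}^p$ purely in terms of $\|f\|_{L^\infty(\Omega)}$ via a monotonicity computation built on Lemma~\ref{lem:algebrico} and~\cite[Remark~A.4]{BLS} (this is Lemma~\ref{lem:trequattroBrasco}), and then imports as a black box the interior oscillation estimate for $\LL_{p,s}$-harmonic functions established in~\cite[Theorem~5.1]{GarainKinnunen}. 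Combining the two gives the excess-decay inequality of Lemma~\ref{prop:decayestimate}, after which the Campanato--H\"older isomorphism concludes. Both paths are viable, but they buy different things: your De~Giorgi route is self-contained and would not rely on the Garain--Kinnunen regularity result, at the cost of essentially re-proving it with a source term, which is exactly where the delicate cutoff bookkeeping in the nonlocal Caccioppoli inequality that you flag as the obstacle arises; the paper's perturbation route is much shorter precisely because that hard analysis has already been carried out for the homogeneous equation in~\cite{GarainKinnunen}, and only the comparison lemma, with its clean monotonicity structure, is new. One remark specific to your version: your oscillation lemma carries a tail at the fixed radius $R_1$ weighted by $(r/R_1)^s$, which is the right shape, but deriving it requires a preliminary change-of-base-point step converting $\mathrm{Tail}(u,x_0,4R)$ (the quantity produced naturally by the Caccioppoli or harmonic-replacement estimate) into $\mathrm{Tail}(u,z,R_1)$ plus $\|u\|_{L^\infty(\Omega)}$ terms; the paper performs this step explicitly at the beginning of the proof of Theorem~\ref{thm:mainHolder}, and you would need an analogous reduction before the dyadic iteration can run with a radius-independent tail.
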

  
  In order to prove Theorem \ref{thm:mainHolder}, we follow the
  approach in \cite{BLS}; broadly put, the main idea behind this approach
  is to \emph{transfer} to the solution $u$ the oscillation estimates proved
  in \cite{GarainKinnunen} for the \emph{$\LL_{p,s}$-harmonic functions}.
  \medskip
  
  To begin with, we establish the following basic existence/uniqueness
  result for the weak solutions to the $(\LL_{p,s})$-Dirichlet problem
  \eqref{eq:DirPbfg}.
\begin{proposition} \label{prop:existuniqweaksol}
  Let $f\in L^\infty(\Omega)$ and $g\in W^{1,p}(\R^n)$ be fixed. Then,
  there exists a unique solution $u = u_{f,\,g}\in W^{1,p}(\R^n)$
  to the Dirichlet problem \eqref{eq:DirPbfg}.
  \end{proposition}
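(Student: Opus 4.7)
The natural approach is the direct method of the calculus of variations, applied to the energy functional
\begin{equation*}
\mathcal{J}(u) := \frac{1}{p}\int_\Omega |\nabla u|^p\,dx + \frac{1}{p}\iint_{\R^{2n}}\frac{|u(x)-u(y)|^p}{|x-y|^{n+ps}}\,dx\,dy - \int_\Omega f u\,dx
\end{equation*}
on the affine set $\mathcal{A}_g := g + \mathcal{X}_0^{1,p}(\Omega)$. Writing $u = v + g$ with $v \in \mathcal{X}_0^{1,p}(\Omega)$ reduces the problem to minimizing $\widetilde{\mathcal{J}}(v) := \mathcal{J}(v+g)$ on the reflexive Banach space $\mathcal{X}_0^{1,p}(\Omega)$ (reflexive as a closed subspace of $W^{1,p}(\R^n)$).

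First I would verify that $\widetilde{\mathcal{J}}$ is well-defined and finite: both nonlinear terms are finite because $v+g \in W^{1,p}(\R^n)$ and, via \cite[Proposition\,2.2]{DRV}, the Gagliardo $(s,p)$-seminorm of any function in $W^{1,p}(\R^n)$ is controlled by its $W^{1,p}$-norm, while the source term is finite since $f \in L^\infty(\Omega) \subseteq L^{(p^*)'}(\Omega)$ combined with the Sobolev embedding (cf.\ Remark \ref{rem:welldef}). Coercivity then follows by combining the pointwise inequality $|\nabla(v+g)|^p \geq 2^{1-p}|\nabla v|^p - |\nabla g|^p$ (and its nonlocal analogue applied inside the double integral) with the Poincar\'e inequality on $\mathcal{X}_0^{1,p}(\Omega)$, valid since $\Omega$ is bounded and $v\big|_\Omega \in W_0^{1,p}(\Omega)$, and Young's inequality to absorb the linear term $\int f v$; this yields $\widetilde{\mathcal{J}}(v) \geq c\,\|\nabla v\|_{L^p(\R^n)}^p - C$. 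Weak lower semicontinuity is immediate because both the local $L^p$-gradient functional and the nonlocal Gagliardo $p$-seminorm are convex and continuous on $W^{1,p}(\R^n)$, while the source term is weakly continuous. By the direct method, a minimizer $v_\star \in \mathcal{X}_0^{1,p}(\Omega)$ exists, and computing $\frac{d}{dt}\mathcal{J}(v_\star + g + t\phi)\big|_{t=0}$ for $\phi \in \mathcal{X}_0^{1,p}(\Omega)$ recovers exactly the weak formulation \eqref{eq:defweaksol}, so $u := v_\star + g$ solves \eqref{eq:DirPbfg}.

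For uniqueness I would exploit the strict convexity of $\widetilde{\mathcal{J}}$, inherited from the strict convexity of $t \mapsto |t|^p$ on $\R$. Equivalently, given two solutions $u_1, u_2$, testing each weak formulation against $\phi := u_1 - u_2 \in \mathcal{X}_0^{1,p}(\Omega)$ and subtracting produces the identity
\begin{equation*}
\begin{aligned}
&\int_\Omega \bigl\langle |\nabla u_1|^{p-2}\nabla u_1 - |\nabla u_2|^{p-2}\nabla u_2,\;\nabla(u_1-u_2)\bigr\rangle\,dx \\
&\quad + \iint_{\R^{2n}}\frac{\bigl(J_p(u_1(x)-u_1(y)) - J_p(u_2(x)-u_2(y))\bigr)\bigl((u_1-u_2)(x)-(u_1-u_2)(y)\bigr)}{|x-y|^{n+ps}}\,dx\,dy = 0,
\end{aligned}
\end{equation*}
in which both summands are pointwise non-negative by the strict monotonicity of $\xi \mapsto |\xi|^{p-2}\xi$ and of $J_p$. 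Hence $\nabla u_1 = \nabla u_2$ a.e.\ in $\Omega$; since $(u_1-u_2)\big|_\Omega \in W_0^{1,p}(\Omega)$, the Poincar\'e inequality forces $u_1 = u_2$ a.e.\ in $\Omega$, and $u_1 = g = u_2$ a.e.\ in $\R^n \setminus \Omega$ completes the argument.

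The main obstacle is the coercivity step, where the cross contributions of the compactly-supported variation $v$ and of the globally defined boundary datum $g$ in the nonlocal double integral must be carefully separated without losing the dominant $\|\nabla v\|_{L^p(\R^n)}^p$ term; the verification of the Euler-Lagrange identity, of weak lower semicontinuity, and of strict monotonicity are then routine once this estimate is in place.
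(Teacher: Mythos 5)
Your proof follows essentially the same route as the paper: write the problem as the minimization of a convex energy on the affine set $g + \mathcal{X}_0^{1,p}(\Omega)$, apply the direct method, and use strict convexity for uniqueness. The only cosmetic difference is that the paper writes the nonlocal energy split over $\Omega\times\Omega$ and $\Omega\times(\R^n\setminus\Omega)$ with $g$ substituted in the exterior, discarding the $u$-independent constant $\frac{1}{p}\iint_{(\R^n\setminus\Omega)^2}|g(x)-g(y)|^p\,|x-y|^{-n-ps}$ that your full $\iint_{\R^{2n}}$ Gagliardo term retains---the two functionals coincide up to that finite additive constant and have the same minimizers and Euler--Lagrange equation.
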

  \begin{proof}
   We consider the space 
   $$\mathbb{W}(g) := \{u\in W^{1,p}(\R^n):\,u-g\in\mathcal{X}_0^{1,p}(\Omega)\},$$
   and the functional $J:\mathbb{W}(g)\to\R$ defined as follows:
   \begin{align*}
    J(u) & := \frac{1}{p}\int_{\Omega}
   |\nabla u|^{p}\,d x
   + \frac{1}{p}\iint_{\Omega\times\Omega}\frac{|u(x)-u(y)|^p}{|x-y|^{n+ps}} \\[0.1cm]
   & \qquad\quad
   + \frac{2}{p}\iint_{\Omega\times(\R^n\setminus\Omega)}\frac{|u(x)-g(y)|^p}{|x-y|^{n+ps}}
   -\int_{\Omega}fu\,dx.
   \end{align*}
   On account of \cite[Remark~2.13]{BLS}, we have that
   $J$ is \emph{strictly convex}; hence, by using the
   Direct Methods in the Calculus of Variations, we derive that
   $J$ has a unique minimizer $u = u_{f,\,g}$ on $\mathbb{W}(g)$, which
   is the unique weak solution to \eqref{eq:DirPbfg}.
  \end{proof}
  Thanks to Proposition \ref{prop:existuniqweaksol}, we can
  prove the following result:
  
  \begin{lemma} \label{lem:trequattroBrasco}
   Let $f\in L^\infty(\Omega)$ and let $u\in W^{1,p}(\R^n)$ be a weak solution
   to \eqref{eq:mainPDE}. Moreover, let $B$ be a given
   Euclidean ball such that $B\Subset\Omega$, and let
   $v\in W^{1,p}(\R^n)$ be the unique weak solution
   to the Dirichlet problem
   \begin{equation} \label{eq:DirvLemma}
    \begin{cases}
     \LL_{p,s}v = 0 & \text{in $\Omega$}, \\
     v = u & \text{in $\R^n\setminus\Omega$}.
    \end{cases}
   \end{equation}
   Then, there exists a constant $C = C(n,s,p) > 0$ such that
   \begin{equation} \label{eq:estimLemmatrequattroGagl}
   [u-v]_{W^{s,p}(\R^n)}^p \leq C|B|^{p'-\frac{p'(n-sp)}{np}}\|f\|_{L^\infty(\Omega)}^{p'}.
   \end{equation}
   In particular, we have
   \begin{equation} \label{eq:estimLemmatrequattroMean}
    \meanint_B|u-v|^p\,dx 
    \leq C|B|^{p'-\frac{p'(n-sp)}{np}+\frac{sp}{n}-1}\|f\|_{L^\infty(\Omega)}^{p'}.
   \end{equation}
  \end{lemma}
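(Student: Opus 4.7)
The plan is to test the weak formulations for $u$ and $v$ against $\varphi := u-v$. Since $v=u$ a.e.\,outside $B$ by the Dirichlet condition defining~$v$, the function $\varphi$ vanishes outside~$B$ and the characterization of $\mathcal{X}_0^{1,p}$ recalled after Remark~\ref{rem:Omegaregular} gives $\varphi\in\mathcal{X}_0^{1,p}(B)$, making it admissible in Definition~\ref{def:weaksol}. Subtracting the two resulting weak identities (and using $\LL_{p,s}v=0$) yields
\[
\int_B \langle |\nabla u|^{p-2}\nabla u - |\nabla v|^{p-2}\nabla v,\, \nabla(u-v)\rangle\,dx + \mathcal{N}(u,v) = \int_B f(u-v)\,dx,
\]
where $\mathcal{N}(u,v)$ denotes the double integral obtained by subtracting the nonlocal pairings built from~$J_p$ in~\eqref{eq:defJp}.

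The key step is to invoke monotonicity in the superquadratic regime $p\geq 2$: the pointwise bounds $\langle |\xi|^{p-2}\xi - |\eta|^{p-2}\eta,\,\xi-\eta\rangle \geq c_p|\xi-\eta|^p$ for $\xi,\eta\in\R^n$ and $(J_p(a)-J_p(b))(a-b) \geq c_p|a-b|^p$ for $a,b\in\R$ are classical. The first makes the local integral nonnegative and lets us drop it; the second yields $\mathcal{N}(u,v) \geq c_p[u-v]_{W^{s,p}(\R^n)}^p$. Combined with H\"older on~$B$ on the right-hand side, this produces
\[
c_p\,[u-v]_{W^{s,p}(\R^n)}^p \leq \|f\|_{L^\infty(\Omega)}\,|B|^{1/p'}\,\|u-v\|_{L^p(B)}.
\]

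To close the inequality I exploit that $u-v$ vanishes outside~$B$: the fractional Sobolev embedding $W^{s,p}_0(B)\hookrightarrow L^{p_s^*}(B)$ with $p_s^* := np/(n-sp)$, followed by H\"older on the ball, yields the Poincar\'e--Sobolev-type bound $\|u-v\|_{L^p(B)} \leq c|B|^{s/n}[u-v]_{W^{s,p}(\R^n)}$. Substituting this above, solving for $[u-v]_{W^{s,p}(\R^n)}^{p-1}$ and raising to the power $p'=p/(p-1)$ produces~\eqref{eq:estimLemmatrequattroGagl}: the exponent of~$|B|$ matches since $p'(1/p'+s/n) = 1+p's/n = p' - p'(n-sp)/(np)$. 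Estimate~\eqref{eq:estimLemmatrequattroMean} follows at once by writing $\meanint_B|u-v|^p\,dx = |B|^{-1}\|u-v\|_{L^p(B)}^p$, applying the Poincar\'e--Sobolev bound a second time, and substituting~\eqref{eq:estimLemmatrequattroGagl}; the extra factor $|B|^{sp/n-1}$ exactly matches the shift of exponent in the statement.

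The main delicate point is really just the admissibility of $\varphi$ as a test function, combined with the precise form of the monotonicity inequalities in the range $p\geq 2$; once these are in place, the proof reduces to bookkeeping of Sobolev exponents. The critical/supercritical regime $sp\geq n$ (should it occur) can be handled by replacing the Sobolev embedding with a direct fractional Poincar\'e inequality on~$B$, producing the same final exponent of~$|B|$.
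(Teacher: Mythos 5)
Your proof is correct and follows essentially the same route as the paper's: test the subtracted weak formulations against $\varphi=u-v\in\mathcal{X}_0^{1,p}(B)$, drop the nonnegative local term, bound the nonlocal term below by $[u-v]_{W^{s,p}(\R^n)}^p$ using the $p\geq2$ monotonicity of $J_p$, then close via H\"older and the fractional Sobolev inequality on $B$. The only cosmetic differences are that the paper uses merely the nonnegativity of the local monotone pairing (your stronger lower bound $c_p|\xi-\eta|^p$ is true but unnecessary) and estimates $\int_B f(u-v)$ directly against $\|u-v\|_{L^{p_s^*}(B)}$ rather than passing through $\|u-v\|_{L^p(B)}$ with a Poincar\'e--Sobolev step; these lead to identical exponents.
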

  \begin{proof}
   We observe that the existence of $v$
   is ensured by
   Proposition \ref{prop:existuniqweaksol}. 
   Then, ta\-king into account that $u$ is a weak solution
   to \eqref{eq:mainPDE} and $v$ is the weak solution to \eqref{eq:DirvLemma},
   for every $\phi\in \mathcal{X}^{1,p}_0(B)$ we get
   \begin{align*}
    & \int_{B}\big(|\nabla u|^{p-2}\langle \nabla u,\nabla\phi\rangle-
    |\nabla v|^{p-2}\langle \nabla v,\nabla\phi\rangle\big)dx  \\
    & \quad
    + \iint_{\R^{2n}}\frac{\big(J_p(u(x)-u(y))-J_p(v(x)-v(y))\big)(\phi(x)-\phi(y))}{|x-y|^{n+ps}}
    \,dx\,dy
    = \int_B f\phi. 
   \end{align*}
  Choosing, in particular, $\phi := u-v$ (notice that, since 
  $v$ is a weak solution of \eqref{eq:DirvLemma}, by definition we have
  $v-u\in\mathcal{X}_0^{1,p}(\Omega)$), we obtain
  \begin{equation} \label{eq:dastimareLemmatrequattro}
   \begin{split}
   & \int_{\Omega}\mathcal{B}(\nabla u,\nabla v)\,dx
   + \iint_{\R^{2n}}
   \frac{\big(J_p(t_1)-J_p(t_2)\big)(t_1-t_2)}{|x-y|^{n+ps}}
    \,dx\,dy \\
    & \qquad
    = \int_B f(u-v)\,d x,
   \end{split}
  \end{equation}
  where 
  $t_1:= u(x)-u(y),\,t_2 := v(x)-v(y)$ and
  $$\mathcal{B}(a,b) := |a|^p+|b|^p-(|a|^{p-2}+|b|^{p-2})\langle a,b\rangle
  \qquad {\mbox{ for all }} a,b\in\R.$$
  Now, an elementary computation based on Cauchy-Schwarz's inequality gives
  \begin{equation} \label{eq:estimpartelocale}
   \mathcal{B}(a,b)\geq 0\qquad{\mbox{ for all }} a,b\in\R.
  \end{equation}
Moreover, since $p\geq 2$, by exploiting \cite[Remark~A.4]{BLS} we have
  \begin{equation} \label{eq:estimpartenonlocale}
   \big(J_p(t_1)-J_p(t_2)\big)(t_1-t_2)\geq \frac{1}{C}|t_1-t_2|^p,
  \end{equation}
  where $C = C(p) > 0$ is a constant only depending on $p$.
  Thus, by combining \eqref{eq:dastimareLemmatrequattro}, 
  \eqref{eq:estimpartelocale} and~\eqref{eq:estimpartenonlocale}, we obtain
  the following estimate:
  \begin{align*}
   & [u-v]_{W^{s,p}(\R^n)}^p
   = \iint_{\R^{2n}}\frac{|t_1-t_2|^p}{|x-y|^{n+ps}}\,dx\,dy
   \\
   & \qquad \leq C\bigg(\int_{\Omega}\mathcal{B}(\nabla u,\nabla v)\,dx
   + \iint_{\R^{2n}}
   \frac{\big(J_p(t_1)-J_p(t_2)\big)(t_1-t_2)}{|x-y|^{n+ps}}
    \,dx\,dy\bigg) \\[0.1cm]
    & \qquad \leq C\int_B f(u-v)\,dx \\&\qquad
    \leq C\|f\|_{L^\infty(\Omega)}\int_B|u-v|\,dx
    \\
    & \qquad
    \leq C\,|B|^{1-\frac{1}{p^*_s}}\|f\|_{L^\infty(\Omega)}
    \|u-v\|_{L^{p^*_s}(B)},
  \end{align*}
  where 
 we have also used the H\"older's inequality and~$p^*_s > 1$ is the so-called fractional critical exponent, that is, 
  $$p^*_s := \frac{np}{n-sp}.$$
  Finally, by applying the fractional Sobolev
  inequality to $\phi = u-v$ (notice that $\phi$ is compactly supported in
  $B$), we get
  $$[u-v]_{W^{s,p}(\R^n)}^p
  \leq C\,|B|^{1-\frac{1}{p^*_s}}\|f\|_{L^\infty(\Omega)}
   [u-v]_{W^{s,p}(\R^n)},$$
  and this readily yields the desired \eqref{eq:estimLemmatrequattroGagl}.
  To prove \eqref{eq:estimLemmatrequattroMean} we observe that,
  by using the H\"older inequality and again the fractional Sobolev inequality, we have
  \begin{align*}
   \meanint_B|u-v|^p\,dx 
   & \leq \bigg(\meanint_B|u-v|^{p^*_s}\,dx\bigg)^{\frac{p}{p^*_s}}
   \leq C\,|B|^{-\frac{p^*_s}{p}}\,[u-v]_{W^{s,p}(\R^n)}^{p};
  \end{align*}
  thus, estimate
  \eqref{eq:estimLemmatrequattroMean} follows directly from \eqref{eq:estimLemmatrequattroGagl}. 
  \end{proof}
  Using Lemma \ref{lem:trequattroBrasco}, we can prove the following
  \emph{excess decay estimate}.
  \begin{lemma} \label{prop:decayestimate}
   Let $f\in L^\infty(\Omega)$ and let $u\in W^{1,p}(\R^n)$ be a weak solution
   to \eqref{eq:mainPDE}. 
   Moreover, let $x_0\in\Omega$ and let 
   $R \in (0,1)$ be such that $B_{4R}(x_0)\Subset
   \Omega$.

   Then, for every $0<r\leq R$ we have the estimate
   \begin{equation} \label{eq:excessdecay}
   \begin{split}
    & \meanint_{B_r(x_0)}
    |u-\overline{u}_{x_0,r}|^p\,dx 
    \leq C\bigg(\frac{R}{r}\bigg)^n\,R^{\gamma}\,\|f\|_{L^\infty(\Omega)}^{p'}
    \\
    & \qquad
     +C\bigg(\frac{r}{R}\bigg)^{\alpha p}
     \bigg(R^\gamma\,\|f\|_{L^\infty(\Omega)}^{p'}
     + \meanint_{B_{4R}(x_0)}|u|^p\,dx 
     + \mathrm{Tail}(u,x_0,4R)^p\bigg),
   \end{split}
   \end{equation}
   where $C,\,\gamma$ and $\alpha$ are positive constants only depending
   on $n$, $s$ and~$p$.
  \end{lemma}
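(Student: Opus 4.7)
The plan is to follow the perturbation strategy of \cite{BLS}: compare the solution $u$ with its $\LL_{p,s}$-harmonic replacement $v$, for which a sharp oscillation/decay estimate is already available from \cite{GarainKinnunen}, and absorb the error through Lemma~\ref{lem:trequattroBrasco}.

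More precisely, set $B := B_{2R}(x_0)$ and let $v\in W^{1,p}(\R^n)$ be the unique weak solution (given by Proposition~\ref{prop:existuniqweaksol}) of
\begin{equation*}
\begin{cases}
\LL_{p,s} v = 0 & \text{in }B,\\
v = u & \text{in }\R^n\setminus B.
\end{cases}
\end{equation*}
Since $B\Subset\Omega$, Lemma~\ref{lem:trequattroBrasco} applied on the ball $B$ yields
\begin{equation*}
\meanint_{B}|u-v|^p\,dx \;\le\; C\,R^{\,\gamma_0}\|f\|_{L^\infty(\Omega)}^{p'},
\end{equation*}
with an explicit $\gamma_0=\gamma_0(n,s,p)>0$ coming from the exponents $p'-p'(n-sp)/(np)+sp/n-1$. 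For $0<r\le R$, the mean-value at scale $r$ is controlled by the mean-value at scale $2R$ up to a factor $(R/r)^n$, so the first term on the right-hand side of \eqref{eq:excessdecay} will capture the $f$-error.

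The second term is obtained by applying to $v$ the local Hölder/oscillation decay for $\LL_{p,s}$-harmonic functions from \cite{GarainKinnunen}, which gives some $\alpha=\alpha(n,s,p)\in(0,1)$ and
\begin{equation*}
\meanint_{B_r(x_0)}|v-\overline{v}_{x_0,r}|^p\,dx
\;\le\; C\Bigl(\tfrac{r}{R}\Bigr)^{\alpha p}\Bigl(\meanint_{B_{2R}(x_0)}|v|^p\,dx + \mathrm{Tail}(v,x_0,2R)^p\Bigr)
\end{equation*}
for all $0<r\le R$. Since $v=u$ outside $B_{2R}(x_0)$, the tail of $v$ at radius $2R$ coincides with the tail of $u$, and hence is bounded by $\mathrm{Tail}(u,x_0,4R)^p$ up to dimensional constants (after trivially enlarging the annulus of integration). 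For the $L^p$-mean of $v$ on $B_{2R}(x_0)$, one uses comparison with $u$: writing $v=u+(v-u)$, the $L^p$-norm of $v-u$ on $B$ is controlled by Lemma~\ref{lem:trequattroBrasco}, while the $L^p$-mean of $u$ on $B$ is dominated by its mean on $B_{4R}(x_0)$, producing the $R^\gamma\|f\|_\infty^{p'}$ and $\meanint_{B_{4R}(x_0)}|u|^p$ contributions appearing on the right of~\eqref{eq:excessdecay}.

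It remains to glue the two pieces. Using $\meanint_{B_r(x_0)}|u-\overline{u}_{x_0,r}|^p\le 2^{p-1}\meanint_{B_r(x_0)}|u-\overline{v}_{x_0,r}|^p$ (recall that averages minimize the $L^p$-deviation up to the constant $2^{p-1}$) and the elementary inequality $(a+b)^p\le 2^{p-1}(a^p+b^p)$, we bound
\begin{equation*}
\meanint_{B_r(x_0)}|u-\overline{u}_{x_0,r}|^p\,dx
\;\le\; C\,\meanint_{B_r(x_0)}|u-v|^p\,dx + C\,\meanint_{B_r(x_0)}|v-\overline{v}_{x_0,r}|^p\,dx,
\end{equation*}
and the first term is dominated via $\meanint_{B_r}|u-v|^p\le (R/r)^n\cdot(|B|/|B_r|)\cdot$ (above estimate on $B$), while the second is the decay estimate just recorded. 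Choosing $\gamma$ as the minimum of $\gamma_0$ and $\gamma_0-\alpha p$ (or simply $\gamma_0$ after adjusting $\alpha$) produces \eqref{eq:excessdecay}.

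The main obstacle I expect is bookkeeping the exponents so that a single $\gamma>0$ appears on both error terms, together with the careful verification that the tail and the $L^p$-mean of the harmonic replacement $v$ are indeed controlled by those of $u$ uniformly in $R\in(0,1)$; in particular, comparing $\mathrm{Tail}(v,x_0,2R)$ with $\mathrm{Tail}(u,x_0,4R)$ requires splitting $\R^n\setminus B_{2R}(x_0)$ into the annulus $B_{4R}\setminus B_{2R}$ (absorbed into $\meanint_{B_{4R}}|u|^p$) and its complement (which is exactly the tail at scale $4R$ up to a bounded factor).
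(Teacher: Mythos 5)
Your proposal follows essentially the same route as the paper's own proof: harmonic replacement of $u$ on a concentric ball (the paper uses $B_{3R}(x_0)$ rather than your $B_{2R}(x_0)$, an immaterial choice), decomposition of the $L^p$-excess of $u$ into the $L^p$-distance $\|u-v\|_{L^p}$ plus the excess of $v$, control of the former by Lemma~\ref{lem:trequattroBrasco} and of the latter by the oscillation decay for $\LL_{p,s}$-harmonic functions from \cite{GarainKinnunen}, and finally the transfer of $\mathrm{Tail}(v,\cdot)$ and $\meanint|v|^p$ to the corresponding quantities for $u$ exactly as you describe. One small slip to watch: in passing from $\meanint_{B}|u-v|^p$ to $\meanint_{B_r}|u-v|^p$ you write a factor $(R/r)^n\cdot(|B|/|B_r|)$, which double-counts; the correct dilation factor is simply $|B|/|B_r|\le C(R/r)^n$, and stacking both would spoil the exponent balance used later when one takes $r=R^\theta$. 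With that corrected, your argument matches the paper's.
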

  
  \begin{proof}
   Let $v\in W^{1,p}(\R^n)$ be the unique weak solution to the problem
   \begin{equation} \label{eq:pbSolvedudecay}
    \begin{cases}
   \LL_{p,s}v = 0 & \text{in $B_{3R}(x_0)$}, \\
   v = u & \text{on $\R^n\setminus B_{3R}(x_0)$}.
   \end{cases}
   \end{equation}
We stress that the existence of $v$ is guaranteed by Proposition \ref{prop:existuniqweaksol}.
   We also observe that, for e\-ve\-ry $r\in(0, R]$, we have
   that
   \begin{equation*}
    |\overline{u}_{x_0,r}-\overline{v}_{x_0,r}|^p
    = \bigg|\meanint_{B_r(x_0)}(u-v)\,dx\bigg|^p\leq 
    \meanint_{B_r(x_0)}|u-v|^p\,dx.
   \end{equation*}
 As a consequence, we obtain
   \begin{equation} \label{eq:toestimI1I2}
   \begin{split}
    \meanint_{B_r(x_0)}
    |u-\overline{u}_{x_0,r}|^p\,dx 
    & \leq \kappa\meanint_{B_r(x_0)}
    |u-v|^p\,dx 
    + \kappa\meanint_{B_r(x_0)}
    |v-\overline{v}_{x_0,r}|^p\,dx 
    \\
    & \qquad
    + \kappa\meanint_{B_r(x_0)}
    |\overline{u}_{x_0,r}-\overline{v}_{x_0,r}|^p\,dx \\[0.2cm]
    & \leq \kappa\bigg(\meanint_{B_r(x_0)}
    |u-v|^p\,dx+\meanint_{B_r(x_0)}
    |v-\overline{v}_{x_0,r}|^p\,dx\bigg),
   \end{split}
   \end{equation}
   where $\kappa = \kappa_p > 0$ is a constant only depending on $p$.
   
   Now, since $B_{3R}(x_0)\Subset
   \Omega$ and $v$ is the weak solution
   to \eqref{eq:pbSolvedudecay}, by Lemma \ref{lem:trequattroBrasco}
   we have
   \begin{equation} \label{eq:estimfI}
   \begin{split}
    \meanint_{B_r(x_0)}
    |u-v|^p\,dx & 
    \leq C\,r^{np'-\frac{p'(n-sp)}{p}+sp-n}\|f\|_{L^\infty(\Omega)}^{p'}
    \\
    & \leq
    C\,\bigg(\frac{R}{r}\bigg)^n\,R^{np'-\frac{p'(n-sp)}{p}+sp-n}
    \|f\|_{L^\infty(\Omega)}^{p'}.
    \end{split}
    \end{equation}
   On the other hand, since $v\in W^{1,p}(\R^n)$ and $v$ is
   $\LL_{p,s}$-harmonic in $B_{3R}(x_0)$ (that is, $\LL_{p,s}v = 0$ in the weak sense),
   we can apply \cite[Theorem\,5.1]{GarainKinnunen}, obtaining
   \begin{equation} \label{eq:estimsI}
    \begin{split}
     \meanint_{B_r(x_0)}
    |v-\overline{v}_{x_0,r}|^p\,dx    
    = \,&\meanint_{B_r(x_0)}
    \bigg|\meanint_{B_r(x_0)}(v(x)-v(y))\,dy\bigg|^p\,dx \\
    \leq \,&\meanint_{B_r(x_0)}\bigg(\meanint_{B_r(x_0)}|v(x)-v(y)|^p\,dy\bigg)dx
    \\
    \leq\,& \big(\mathrm{osc}_{B_r(x_0)}v\big)^p \\[0.2cm]
     \leq\,& C\bigg(\frac{r}{R}\bigg)^{\alpha p}\bigg(
    \mathrm{Tail}(v,x_0,R)^p+\meanint_{B_{2R}(x_0)}|v|^p\,dx\bigg),
    \end{split}
   \end{equation}
   where $C$ and $\alpha$ are positive constants only
   depending on $n$, $s$ and~$p$. By combining 
   estimates
   \eqref{eq:estimfI}-\eqref{eq:estimsI}
   with \eqref{eq:toestimI1I2}, we then get
   \begin{equation} \label{eq:toestimTailmean}
   \begin{split}
    \meanint_{B_r(x_0)}
    |u-\overline{u}_{x_0,r}|^p\,dx 
    & \leq 
    C\,\bigg(\frac{R}{r}\bigg)^n\,R^{\gamma}
    \|f\|_{L^\infty(\Omega)}^{p'} \\
    & \qquad
    + C\bigg(\frac{r}{R}\bigg)^{\alpha p}\bigg(
    \mathrm{Tail}(v,x_0,R)^p+\meanint_{B_{2R}(x_0)}|v|^p\,dx\bigg),
   \end{split}
   \end{equation}
   where we have set
   \begin{equation}\label{eq:gamma}
   \gamma := np'-\frac{p'(n-sp)}{p}+sp-n >0.
   \end{equation}
   To complete the proof
   of \eqref{eq:excessdecay} we observe that,
    since $u \equiv v$ a.e.\,on $\R^n\setminus B_{3R}(x_0)$
   (and $0 < R \leq 1$), by definition of $\mathrm{Tail}(v,x_0,R)$
   we have
   \begin{equation} \label{eq:estimTailexcess}
   \begin{split}
    & \mathrm{Tail}(v,x_0,R)^p
    = R^p\int_{\R^n\setminus
  B_R(x_0)}\frac{|v|^{p}}{|x-x_0|^{n+ps}}\,dx \\
  & \qquad = R^p\int_{\R^n\setminus
  B_{4R}(x_0)}\frac{|v|^{p}}{|x-x_0|^{n+ps}}\,dx 
  + R^p\int_{
  B_{4R}(x_0)\setminus B_R(x_0)}\frac{|v|^{p}}{|x-x_0|^{n+ps}}\,dx \\[0.2cm]
  & \qquad \leq C\bigg(
  \mathrm{Tail}(u,x_0,4R)^p 
  + \meanint_{B_{4R}(x_0)}
  |v|^p\,dx\bigg).
   \end{split}
   \end{equation}
   Moreover, by using again Lemma \ref{lem:trequattroBrasco}, we get
   \begin{equation} \label{eq:estimmeanintexcess}
    \begin{split}
    \meanint_{B_{4R}(x_0)}
  |v|^p\,dx & \leq 
  C\meanint_{B_{4R}(x_0)}|u-v|^p\,dx + C\meanint_{B_{4R}(x_0)}|u|^p\,dx \\
  & \leq 
  C\bigg(R^{\gamma}\|f\|_{L^\infty(\Omega)}^{p'}
  + 
  \meanint_{B_{4R}(x_0)}|u|^p\,dx\bigg).
    \end{split}
   \end{equation}
  Thus, by inserting \eqref{eq:estimTailexcess}-\eqref{eq:estimmeanintexcess} 
  into \eqref{eq:toestimTailmean}, we obtain the desired
  \eqref{eq:excessdecay}.
  \end{proof}
  
  By combining Lemmata \ref{lem:trequattroBrasco} and~\ref{prop:decayestimate}, 
  we can provide the
  \begin{proof}[Proof of Theorem\,\ref{thm:mainHolder}]
   The proof follows the lines of \cite[Theorem 3.6]{BLS}. 
   First, we consider a ball $B_{R_0}(z) \subset\subset \Omega$ and we define the quantities
   \begin{equation}
   d:= \mathrm{dist}(B_{R_0}(z), \partial \Omega) >0
      \quad \textrm{and }
   \quad
   R_1:= \dfrac{d}{2}+R_0.
   \end{equation}
  Thus, we can choose a point $x_0 \in B_{R_0}(z)$ and the ball  $B_{4R}(x_0)$, where $R < \min\{1, \tfrac{d}{8}\}$. In particular, this implies that $B_{4R}(x_0)\subset B_{R_1}(z)$.
  Since $R<1$, we can then apply Lemma \ref{prop:decayestimate}: this gives,
  for every $0 < r \leq R$, 
  \begin{equation}\label{eq:StimaThm3.6}
  \begin{aligned}
  \meanint_{B_{r}(x_0)}&|u-\overline{u}_{x_0,r}|^p\,dx \leq C \left(\dfrac{R}{r}\right)^{n}R^{\gamma} \|f\|^{p'}_{L^{\infty}(\Omega)}\\
  &+ C\left(\dfrac{r}{R}\right)^{\alpha \,p}\left( R^{\gamma}\|f\|^{p'}_{L^{\infty}(\Omega)} + \meanint_{B_{4R}(x_0)}|u|^p \, dx + \mathrm{Tail}(u,x_0,4R)^p\right)\\
  &\leq C \left(\dfrac{R}{r}\right)^{n}R^{\gamma} \|f\|^{p'}_{L^{\infty}(\Omega)}\\
  &+ C\left(\dfrac{r}{R}\right)^{\alpha \,p}\left( d^{\gamma}\|f\|^{p'}_{L^{\infty}(\Omega)} + \|u\|_{L^{\infty}(\Omega)}^p \, dx + \mathrm{Tail}(u,x_0,4R)^p\right),
  \end{aligned}
  \end{equation}
  \noindent where $\gamma >0$ is as in \eqref{eq:gamma}.
Now, we notice that for every $x \notin B_{R_1}(z)$ it holds that
\begin{equation*}
|x-x_0| \geq |x-z|-|z-x_0| \geq \dfrac{R_1 - |z-x_0|}{R_1}|x-z|.
\end{equation*}
Therefore, we have
\begin{equation*}
\begin{split}
 \mathrm{Tail}(u,x_0,4R)^p 
 & = 
 (4R)^p \int_{\mathbb{R}^n \setminus B_{R_1}(z)} 
  \dfrac{|u|^p}{|x-x_0|^{n+ps}}\, dx \\
  & \qquad\qquad + (4R)^p \int_{B_{R_1}(z)\setminus B_{4R}(x_0)} 
  \dfrac{|u|^p}{|x-x_0|^{n+ps}}\, dx\\
 & \leq \left(\dfrac{4R}{R_1}\right)^{p}\left(\dfrac{R_1}{R_1 - |z-x_0|}\right)^{n+ps} 
 \mathrm{Tail}(u,z,R_1)^p + 
 C \|u\|^p_{L^{\infty}(\Omega)} \\[0.2cm]
 & \leq \mathrm{Tail}(u,z,R_1)^p + C \|u\|^p_{L^{\infty}(\Omega)}
\end{split}
\end{equation*}
\noindent for a constant $C$ depending on $n$, $s$ and~$p$. We recall that in the last estimate we exploited that
\begin{equation*}
\dfrac{4R}{R_1} < \dfrac{\tfrac{d}{2}}{R_0 + \tfrac{d}{2}}<1 \quad \textrm{and } \quad \dfrac{4R}{R_1 - |x_0-z|}\leq \dfrac{4R}{R_1-R_0}<1.
\end{equation*}
Consequently, continuing the estimate started with \eqref{eq:StimaThm3.6}, we find that
\begin{equation}\label{eq:StimaThm3.6Bis}
\begin{aligned}
  \meanint_{B_{r}(x_0)}&|u-\overline{u}_{x_0,r}|^p\,dx \leq C \left(\dfrac{R}{r}\right)^{n}R^{\gamma} \|f\|^{p'}_{L^{\infty}(\Omega)}\\
  &+ C\left(\dfrac{r}{R}\right)^{\alpha \,p}\left( d^{\gamma}\|f\|^{p'}_{L^{\infty}(\Omega)} + \|u\|_{L^{\infty}(\Omega)}^p \, dx + \mathrm{Tail}(u,z,R_1)^p\right).
  \end{aligned}
  \end{equation}
We can now define the positive number
$$\theta := 1 + \dfrac{\gamma}{n+\alpha \, p},$$
\noindent and take $r:= R^{\theta}$ in \eqref{eq:StimaThm3.6Bis}, which yields
\begin{equation*} 
\begin{split}
  & r^{-\beta p}\meanint_{B_{r}(x_0)\cap B_{R_0}(z)} |u-\overline{u}_{x_0,r}|^p\,dx  \\
  & \qquad \leq C \left( (d^{\gamma}+1)\|f\|^{p'}_{L^{\infty}(\Omega} + \|u\|^{p}_{L^{\infty}(\Omega)} + \mathrm{Tail}(u,z,R_1)^p\right),
  \end{split}
\end{equation*}
  where we have set 
  $$\beta:= \dfrac{\gamma \alpha}{n+\alpha p + \gamma}>0.$$ 
  This shows that $u \in \mathcal{L}^{p,n+\beta\gamma}(B_{R_0}(z))$, the 
  Campanato space isomorphic to the H\"{o}lder space $C^{0,\beta}(\overline{B_{R_0}(z)})$. 
  This completes the proof of Theorem~\ref{thm:mainHolder}.  
  \end{proof}
  
  By gathering together Theorems \ref{thm:GlobalBd} and 
  \ref{thm:mainHolder}, we can easily prove
  the needed \emph{interior H\"older regularity} of the eigenfunctions
  of $\LL_{p,s}$.
  
  \begin{theorem} \label{thm:contEigen}
   Let $\lambda\geq \lambda_1(\Omega)$ be an eigenvalue of
   $\LL_{p,s}$, and let $\phi_\lambda\in \mathcal{X}_0^{1,p}(\Omega)\setminus\{0\}$ 
   be an eigenfunction
   associated with $\lambda$. Then, $\phi_\lambda\in C(\Omega)$.
  \end{theorem}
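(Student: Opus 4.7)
The plan is to recognise $\phi_\lambda$ as a bounded weak solution to a non-homogeneous equation of the form \eqref{eq:mainPDE}, and then read off the conclusion from the interior H\"older regularity result of Theorem~\ref{thm:mainHolder}. Apart from a case distinction in $p$, essentially nothing new is required.

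First I would invoke Theorem~\ref{thm:GlobalBd} to obtain $\phi_\lambda\in L^\infty(\R^n)$. Setting
$$f:=\lambda\,|\phi_\lambda|^{p-2}\phi_\lambda,$$
the boundedness of $\phi_\lambda$ and of $\Omega$ gives immediately $f\in L^\infty(\Omega)$. By Definition~\ref{def:eigenval} and Remark~\ref{rem:defwellposedEigenval}, $\phi_\lambda$ is a weak solution of $\LL_{p,s}u=f$ in $\Omega$ in the sense of Definition~\ref{def:weaksol}: the required integrability $f\in L^q(\Omega)$ with $q\ge(p^*)'$ is automatic since $f\in L^\infty$ on the bounded set $\Omega$.

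Next I would split into the two cases motivated by Remark~\ref{rem:choicepleqn}. If $2\le p\le n$, Theorem~\ref{thm:mainHolder} applies verbatim to $u=\phi_\lambda$ and yields $\phi_\lambda\in C^{0,\beta}_{\loc}(\Omega)$ for some $\beta=\beta(n,s,p)\in(0,1)$; in particular $\phi_\lambda\in C(\Omega)$, with a quantitative estimate on any $B_{R_0}(z)\Subset\Omega$ in terms of $\|\phi_\lambda\|_{L^\infty(\Omega)}$ and $\mathrm{Tail}(\phi_\lambda,z,R_1)$ (both of which are finite, since $\phi_\lambda$ is bounded and vanishes outside $\Omega$). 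When $p>n$, one does not even need the local H\"older machinery of Section~\ref{INTRERSEC}: since by definition $\phi_\lambda\in\mathcal{X}_0^{1,p}(\Omega)\subseteq W^{1,p}(\R^n)$, the classical Sobolev embedding $W^{1,p}(\R^n)\hookrightarrow C^{0,1-n/p}(\R^n)$ already delivers a globally H\"older-continuous representative, which certainly belongs to $C(\Omega)$.

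I do not expect a genuine obstacle: the proof is a packaging step combining the a priori $L^\infty$ bound for eigenfunctions (Theorem~\ref{thm:GlobalBd}) with the interior H\"older estimate just established (Theorem~\ref{thm:mainHolder}). The only point that deserves an explicit check is that the right-hand side of the eigenvalue equation lies in the class of admissible forcings required by Definition~\ref{def:weaksol}, i.e.\ in some $L^q(\Omega)$ with $q\ge(p^*)'$; but boundedness of $\phi_\lambda$ makes this trivial, so the whole argument reduces to two citations plus a one-line case split.
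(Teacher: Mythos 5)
Your proposal is correct and follows exactly the paper's own argument: cite Theorem~\ref{thm:GlobalBd} for global boundedness, observe that $f=\lambda|\phi_\lambda|^{p-2}\phi_\lambda\in L^\infty(\Omega)$, and invoke Theorem~\ref{thm:mainHolder}. The only difference is that you make the case $p>n$ explicit, whereas the paper handles it implicitly through the standing assumption $2\le p\le n$ of Section~\ref{INTRERSEC} together with Remark~\ref{rem:choicepleqn}; this is a cosmetic, not substantive, divergence.
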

  \begin{proof}
   On account of Theorem \ref{thm:GlobalBd}, we know that
   $\phi_\lambda\in L^\infty(\R^n)$. As a consequence, $\phi_\lambda$
   is a \emph{globally bounded weak solution}
   to \eqref{eq:mainPDE},
   with 
   $$f := \lambda|\phi_\lambda|^{p-2}\phi_\lambda\in L^\infty(\Omega).$$
   We are then entitled to apply Theorem \ref{thm:mainHolder}, which ensures
   that 
   $\phi_\lambda\in C^{0,\beta}_{\loc}(\Omega)$
   for some 
   $\beta = \beta(n,s,p)\in (0,1)$. This ends the proof of Theorem~\ref{thm:contEigen}.
  \end{proof}
 
 \section{The Hong-Krahn-Szeg\"o inequality for $\LL_{p,s}$}\label{LULT}
 In this last section
 of the paper we provide the proof of Theorem \ref{main:thm}.
 Before doing this, we establish two preliminary results.
 \medskip
 
 First of all, we prove 
 the following \emph{Faber-Krahn type inequality for $\LL_{p,s}$.}
 \begin{theorem} \label{thm.FK}
  Let~$\Omega\subseteq\R^n$ be a 
  bounded open set,
   and let $m:= |\Omega|\in (0,\infty)$. Then,
   if $B^{(m)}$ is any Euclidean ball with 
   volume
   $m$, one has
   \begin{equation} \label{eq.FK}
    \lambda_1(\Omega)\geq \lambda_1 (B^{(m)}).
   \end{equation}
   Moreover, if the equality holds in \eqref{eq.FK}, then 
   $\Omega$ is a ball.
   \end{theorem}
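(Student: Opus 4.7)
The plan is to prove~\eqref{eq.FK} by Schwarz symmetrization of the principal eigenfunction, combining the classical Pólya--Szegő inequality for $\int|\nabla u|^p$ with its fractional analogue for the Gagliardo seminorm, and then to deduce the equality case from the rigidity parts of these two rearrangement inequalities.

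For the inequality, let $u_0\in\mathcal{M}(\Omega)$ be the non-negative principal eigenfunction granted by Proposition~\ref{prop.BMV}, so that~\eqref{eq:uzerolambda1} holds and $u_0=0$ a.e.\ in~$\R^n\setminus\Omega$. Let $u_0^*$ denote the decreasing Schwarz symmetrization of~$u_0$, centered for concreteness at the origin; by equimeasurability $\|u_0^*\|_{L^p(\R^n)}=1$ and, since $|\{u_0>0\}|\leq m$, we have $u_0^*=0$ a.e.\ in $\R^n\setminus B^{(m)}$. The classical Pólya--Szegő inequality yields
$$\int_{B^{(m)}}|\nabla u_0^*|^p\,dx \ \leq \ \int_{\Omega}|\nabla u_0|^p\,dx,$$
while the fractional Pólya--Szegő inequality (Almgren--Lieb for $p=2$, Frank--Seiringer for general~$p$) yields
$$\iint_{\R^{2n}}\frac{|u_0^*(x)-u_0^*(y)|^p}{|x-y|^{n+ps}}\,dx\,dy \ \leq \ \iint_{\R^{2n}}\frac{|u_0(x)-u_0(y)|^p}{|x-y|^{n+ps}}\,dx\,dy.$$
In particular $u_0^*\in W^{1,p}(\R^n)$, and since balls are $C^1$, Remark~\ref{rem:Omegaregular} gives $u_0^*\in\mathcal{X}_0^{1,p}(B^{(m)})\cap\mathcal{M}(B^{(m)})$. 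Plugging $u_0^*$ into the variational principle~\eqref{eq:deflambdavar} for $\lambda_1(B^{(m)})$ and combining with~\eqref{eq:uzerolambda1} and the two displays above produces $\lambda_1(B^{(m)})\leq\lambda_1(\Omega)$, which is~\eqref{eq.FK}.

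For the rigidity statement, assume equality in~\eqref{eq.FK}. Chasing the chain backwards, $u_0^*$ must itself be a minimizer in~\eqref{eq:deflambdavar} for~$B^{(m)}$, and moreover \emph{both} Pólya--Szegő inequalities above must be equalities. The main obstacle is turning these two equalities into the conclusion that $u_0$ agrees with $u_0^*$ up to a translation. I would invoke the equality case of the fractional Pólya--Szegő inequality due to Frank--Seiringer, which, exploiting the continuity of $u_0$ inside $\Omega$ guaranteed by Theorem~\ref{thm:contEigen} and the strong minimum principle for~$\LL_{p,s}$ applied to the non-negative eigenfunction $u_0$, forces $u_0=u_0^*(\cdot-x_0)$ for some $x_0\in\R^n$; alternatively one may rely on the Brothers--Ziemer rigidity for the local part, after verifying that the critical set $\{\nabla u_0=0\}\cap\{0<u_0<\sup u_0\}$ has zero measure, which again follows from Theorem~\ref{thm:contEigen} together with $\LL_{p,s}$-unique continuation. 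Either route yields $\{u_0>0\}=x_0+B^{(m)}$ up to a null set; since $u_0=0$ outside $\Omega$ and $u_0>0$ on $\Omega$ (again by the strong minimum principle), we conclude that $\Omega$ coincides with a Euclidean ball of volume~$m$.
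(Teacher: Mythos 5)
Your proposal follows essentially the same strategy as the paper: Schwarz symmetrization of the principal eigenfunction, the local P\`olya--Szeg\"o inequality together with its fractional analogue (Almgren--Lieb/Frank--Seiringer), and the Frank--Seiringer rigidity theorem to handle the equality case. The auxiliary machinery you invoke in the rigidity discussion (strong minimum principle, Brothers--Ziemer, unique continuation for $\LL_{p,s}$) is superfluous and not established in the paper --- the equality case of the fractional P\`olya--Szeg\"o inequality \cite[Theorem A.1]{FS} alone already forces $u_0$ to be a translate of a symmetric decreasing function, which is precisely how the paper concludes that $\{u_0>0\}$ is a ball up to a null set.
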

 \begin{proof}
  The proof is similar to that in the linear case, see \cite[Theorem\,1.1]{BDVV2};
  how\-e\-ver, we present it here in all the details for the sake of completeness.
  \vspace*{0.1cm}
  
   To begin with, let
   $\widehat{B}^{(m)}$ be the Euclidean ball with centre $0$ and volume $m$.
   Moreover, let $u_0\in\mathcal{M}(\Omega)$ be the
 principal
   eigenfunction for $\LL_{p,s}$. We recall that,
   by definition, $u_0$ is the unique non-negative eigenfunction
   associated with the first eigenvalue
   $\lambda_1(\Omega)$; in particular, we have (see \eqref{eq:uzerolambda1})
   \begin{equation} \label{eq:lambda1uzeroFK} 
    \lambda_1(\Omega) =  
  \int_\Omega|\nabla u_0|^p\,dx + \iint_{\R^{2n}}
 \frac{|u_0(x)-u_0(y)|^p}{|x-y|^{n+ps}}\,dx\,dy.
 \end{equation}
   Then, we define
   $u_0^\ast:\R^n\to\R$
   as the (decreasing) Schwarz symmetrization of $u_0$.
   Now, since $u_0\in\mathcal{M}(\Omega)$, from
   the well-known inequality by P\`olya and Szeg\"o (see e.g.~\cite{PSZ})
   we deduce that
   \begin{equation} \label{eq.PSLoc}
    u_0^\ast\in \mathcal{M}(\widehat{B}^{(m)})\qquad\text{and}\qquad
   \int_{\widehat{B}^{(m)}}|\nabla u_0^\ast|^p\, dx
   \leq \int_{\Omega}|\nabla u|^p\, dx.
   \end{equation}
Furthermore, by \cite[Theorem\,9.2]{AlLieb}
   (see also \cite[Theorem\,A.1]{FS}), we also have
   \begin{equation} \label{eq.PSNonloc}
    \iint_{\R^{2n}}\frac{|u_0^\ast(x)-u_0^\ast(y)|^p}{|x-y|^{n+ps}}\, d x\, dy
   \leq \iint_{\R^{2n}}\frac{|u_0(x)-u_0(y)|^p}{|x-y|^{n+ps}}\, d x\, dy.
   \end{equation}
   Gathering all these facts and using \eqref{eq:lambda1uzeroFK}, we get
   \begin{equation} \label{eq.estimuusharp}
   \begin{split}
    \lambda_{1}(\Omega)
    & 
    = \int_{\Omega}|\nabla u_0|^2\, dx
    +\iint_{\R^{2n}}\frac{|u_0(x)-u_0(y)|^2}{|x-y|^{n+2s}}\, d x\, dy \\
    & \geq 
    \int_{\widehat{B}^{(m)}}|\nabla u_0^\ast|^2\, dx
    + \iint_{\R^{2n}}\frac{|u_0^\ast(x)-u_0^\ast(y)|^2}{|x-y|^{n+2s}}\, d x\, dy\\&
    \geq \lambda_{1}(\widehat {B}^{(m)}).
   \end{split}
   \end{equation}
   From this, since $\lambda_{1}(\cdot)$ is translation-invariant, 
   we derive the validity
   of \eqref{eq.FK} for every Euclidean ball $B^{(m)}$ with volume $m$.
   
   To complete the proof of Theorem~\ref{thm.FK}, let us suppose that 
   $$\lambda_{1}(\Omega) = \lambda_{1}(B^{(m)})$$
   for some (and hence, for every) ball $B^{(m)}$ with $|B^{(m)}| = m$.
   By \eqref{eq.estimuusharp} we have
   \begin{align*}
    & \int_{\Omega}|\nabla u_0|^p\, dx
    +\iint_{\R^{2n}}\frac{|u_0(x)-u_0(y)|^p}{|x-y|^{n+ps}}\, d x\, dy
    = \lambda_1(\Omega) \\
    & \qquad = \lambda_1(\widehat{B}^{(m)}) =
    \int_{\widehat{B}^{(m)}}|\nabla (u_0)^\ast|^p\, dx
    + \iint_{\R^{2n}}\frac{|u_0^\ast(x)-u_0^\ast(y)|^p}{|x-y|^{n+ps}}\, d x\, dy.
    \end{align*}
    In particular, by \eqref{eq.PSLoc} and~\eqref{eq.PSNonloc} we get
    $$
    \iint_{\R^{2n}}\frac{|u_0(x)-u_0(y)|^p}{|x-y|^{n+ps}}\, d x\, dy 
    = \iint_{\R^{2n}}\frac{|u_0^\ast(x)-u_0^\ast(y)|^p}{|x-y|^{n+ps}}\, d x\, dy.$$
    We are then in the position to apply once again \cite[Theorem\,A.1]{FS},
    which ensures that
    $u_0$ must be proportional to a translation of a symmetric decreasing function.
    As a consequence of this fact, we immediately deduce that
    $$\Omega = \{x\in\R^n:\,u_0(x) > 0\}$$
    must be a ball (up to a set of zero Lebesgue measure).  This completes the proof of Theorem~\ref{thm.FK}.
\end{proof}

 Then, we establish the following lemma on \emph{nodal domains}.
 
 \begin{lemma} \label{lem:nodal}
 Let $\lambda>\lambda_1(\Omega)$ be an eigenvalue of $\LL_{p,s}$,
 and let $\phi_\lambda\in \mathcal{X}_0^{1,p}(\Omega)\setminus\{0\}$
 be an eigenfunction associated with $\lambda$. We define the sets
 \begin{equation*}
 \Omega^+ := \left\{ x \in \Omega: \phi_{\lambda}(x) >0\right\} \quad \textrm{and} \quad \Omega^- := \left\{ x \in \Omega: \phi_{\lambda}(x) <0\right\}.
 \end{equation*}
 Then $\lambda > \max\left\{\lambda_1(\Omega^{+}), \lambda_1(\Omega^-)\right\}$.
 \end{lemma}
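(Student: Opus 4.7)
The plan is to use $\phi_\lambda^+$ as a test function in the weak formulation of the eigenvalue equation for $\phi_\lambda$, and then to compare the resulting identity with the variational characterization of $\lambda_1(\Omega^+)$ from Proposition~\ref{prop.BMV}. An identical argument applied to $\phi_\lambda^-$ will handle $\lambda_1(\Omega^-)$.

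\medskip

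\textbf{Step 1 (nodal sets are admissible).} Since $\lambda > \lambda_1(\Omega)$, Proposition~\ref{prop.BMV}(3) tells us $\phi_\lambda$ changes sign, and Theorem~\ref{thm:contEigen} gives $\phi_\lambda\in C(\Omega)$; hence $\Omega^+$ and $\Omega^-$ are nonempty open subsets of $\Omega$. Because $\phi_\lambda\in\mathcal{X}_0^{1,p}(\Omega)$ we have $\phi_\lambda^+\in\mathcal{X}_0^{1,p}(\Omega)$ (recorded in Section~\ref{sec.Prel}); the continuity of $\phi_\lambda$ inside $\Omega$ combined with $\phi_\lambda^+\equiv 0$ outside the open set $\Omega^+$ lets us conclude, via a standard truncation/mollification, that in fact
$$\phi_\lambda^+\in \mathcal{X}_0^{1,p}(\Omega^+)\setminus\{0\}.$$
A normalization then places $\psi:=\phi_\lambda^+/\|\phi_\lambda^+\|_{L^p(\R^n)}$ in $\mathcal{M}(\Omega^+)$.

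\medskip

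\textbf{Step 2 (testing the equation).} Using $\phi_\lambda^+$ as a test function in \eqref{eq:defweaksol} with $f=\lambda|\phi_\lambda|^{p-2}\phi_\lambda$, the local gradient pairing collapses to $|\nabla\phi_\lambda^+|^p$ pointwise and the right-hand side becomes $\lambda(\phi_\lambda^+)^p$, yielding
\begin{equation*}
\int_{\Omega^+}|\nabla\phi_\lambda^+|^p\,dx + \iint_{\R^{2n}}\frac{J_p(\phi_\lambda(x)-\phi_\lambda(y))(\phi_\lambda^+(x)-\phi_\lambda^+(y))}{|x-y|^{n+ps}}\,dx\,dy = \lambda\int_{\Omega^+}(\phi_\lambda^+)^p\,dx.
\end{equation*}

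\medskip

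\textbf{Step 3 (the key nonlocal inequality).} Writing $a:=\phi_\lambda(x)$, $b:=\phi_\lambda(y)$ and splitting into the four sign-regions, one checks the pointwise identity/inequality
$$J_p(a-b)(a^+-b^+)\geq |a^+-b^+|^p,$$
valid for all $a,b\in\R$: there is equality when $a,b$ have the same sign, while in the mixed case $a>0>b$ one has $a^+-b^+=a$ and $J_p(a-b)(a^+-b^+)=(a-b)^{p-1}a\geq a^{p-1}a=|a^+-b^+|^p$, with \emph{strict} inequality as soon as $b\neq 0$ (and symmetrically for $b>0>a$). Since both $\Omega^+$ and $\Omega^-$ have positive Lebesgue measure, the set of $(x,y)$ on which strict inequality occurs has positive $2n$-dimensional measure, and therefore
$$\iint_{\R^{2n}}\frac{J_p(\phi_\lambda(x)-\phi_\lambda(y))(\phi_\lambda^+(x)-\phi_\lambda^+(y))}{|x-y|^{n+ps}}\,dx\,dy > \iint_{\R^{2n}}\frac{|\phi_\lambda^+(x)-\phi_\lambda^+(y)|^p}{|x-y|^{n+ps}}\,dx\,dy.$$

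\medskip

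\textbf{Step 4 (conclusion via Rayleigh quotient).} Combining Steps~2 and~3 gives the strict inequality
$$\int_{\Omega^+}|\nabla\phi_\lambda^+|^p\,dx + \iint_{\R^{2n}}\frac{|\phi_\lambda^+(x)-\phi_\lambda^+(y)|^p}{|x-y|^{n+ps}}\,dx\,dy < \lambda\int_{\Omega^+}(\phi_\lambda^+)^p\,dx.$$
Dividing by $\int(\phi_\lambda^+)^p$ and using $\psi\in\mathcal{M}(\Omega^+)$ together with the variational formula \eqref{eq:deflambdavar} produces $\lambda_1(\Omega^+)<\lambda$. The same scheme with $\phi_\lambda^-$ in place of $\phi_\lambda^+$ gives $\lambda_1(\Omega^-)<\lambda$, proving the lemma.

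\medskip

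\textbf{Expected main obstacle.} The bookkeeping in Step~3 is the delicate point: one must verify not only the pointwise estimate $J_p(a-b)(a^+-b^+)\geq|a^+-b^+|^p$ in each sign-regime, but also that the set $(\Omega^+\times\Omega^-)\cup(\Omega^-\times\Omega^+)$ contributes a \emph{strictly} positive excess to the double integral. This is exactly what supplies the strict inequality in the conclusion, and it is really here that the nonlocal nature of the operator is exploited in an essential way.
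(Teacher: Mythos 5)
Your proposal is correct and structurally the same as the paper's: test the weak formulation of the eigenvalue equation with $v=\phi_\lambda^+$, reduce the local term to $\int_{\Omega^+}|\nabla\phi_\lambda^+|^p$, and compare with the Rayleigh quotient \eqref{eq:deflambdavar} for $\lambda_1(\Omega^+)$, the crux being a strict lower bound on the nonlocal term by $\iint |\phi_\lambda^+(x)-\phi_\lambda^+(y)|^p/|x-y|^{n+ps}\,dx\,dy$. The one genuine divergence is in how this nonlocal bound is established. The paper invokes Lemma~\ref{lem:algebrico}(1) with $a=\phi_\lambda^+(x)-\phi_\lambda^+(y)$ and $b=\phi_\lambda^-(x)-\phi_\lambda^-(y)$ (noting $ab\le 0$), discards the nonnegative remainder $-(p-1)|a|^{p-2}ab$, and extracts the strict excess from the negative cross-term integral over $\Omega^+\times\Omega^-$; the paper's argument is explicitly pinned to $p\geq2$. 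You instead prove the cleaner pointwise inequality $J_p(a-b)(a^+-b^+)\ge|a^+-b^+|^p$ directly, with $a=\phi_\lambda(x)$, $b=\phi_\lambda(y)$, via a sign-case check that reduces to the monotonicity of $t\mapsto t^{p-1}$ on $[0,\infty)$. This sidesteps Lemma~\ref{lem:algebrico}(1) entirely, works for every $p>1$ rather than just $p\ge 2$, and makes the identification of the strict-excess set $(\Omega^+\times\Omega^-)\cup(\Omega^-\times\Omega^+)$ immediate. After unwinding, both routes give the identical integral inequality, so the difference is expository rather than substantive — but yours is the tidier derivation. One point both proofs leave somewhat implicit is that $\phi_\lambda^+$, suitably normalized, belongs to $\mathcal{M}(\Omega^+)$, i.e.\ that $\phi_\lambda^+\in\mathcal{X}_0^{1,p}(\Omega^+)$ and not merely $\mathcal{X}_0^{1,p}(\Omega)$; your appeal to truncation via $(\phi_\lambda^+-\varepsilon)^+$, which has compact support in $\Omega^+$ by continuity of $\phi_\lambda$ inside $\Omega$ and boundedness of $\Omega$, is exactly the right mechanism and spells out what the paper tacitly assumes when it applies \eqref{eq:deflambdavar} to $\Omega^+$.
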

  The proof of Lemma \ref{lem:nodal} 
  takes inspiration from 
   \cite[Lemma\,6.1]{BP} (see also \cite[Lemma\,4.2]{GoelSreenadh}).
   
 \begin{proof}[Proof of Lemma~\ref{lem:nodal}]
 First of all, on account of Theorem \ref{thm:contEigen}
 we have that the sets~$\Omega^+$ and~$\Omega^-$ are open, and 
 therefore the eigenvalues~$\lambda_{1}(\Omega^{\pm})$ are well--defined. 
 
 Moreover, thanks to Proposition \ref{prop.BMV}, 
 we know that $\phi_\lambda$ changes sign in $\Omega$, and therefore it is convenient to write
 $\phi_{\lambda}= \phi_{\lambda}^+ - \phi_{\lambda}^-,$
 where $\phi_{\lambda}^+$ and $\phi_{\lambda}^-$ denote, respectively, the positive and negative parts of $\phi_{\lambda}$, with the convention that both the functions~$\phi_{\lambda}^+$ and $\phi_{\lambda}^-$ are non-negative.
 
 Let us now start proving that $\lambda > \lambda_{1}(\Omega^+)$.
 By using the fact that $\phi_{\lambda}$ is an eigenfuction of $\LL_{p,s}$ corresponding to $\lambda$, it follows that
 \begin{equation*}
 \begin{aligned}
 & \int_{\Omega}|\nabla \phi_{\lambda}|^{p-2}\langle \nabla \phi_{\lambda},v\rangle \, dx \\
 & \qquad\qquad 
 + \iint_{\mathbb{R}^{2n}}\dfrac{|\phi_{\lambda}(x)-\phi_{\lambda}(y)|^{p-2}(\phi_{\lambda}(x)-\phi_{\lambda}(y))(v(x)-v(y)}{|x-y|^{n+ps}}\, dxdy\\
 & \qquad = \lambda \int_{\Omega}|\phi_{\lambda}|^{p-2}\phi_{\lambda}v \, dx, \quad \textrm{ for all } v \in \mathcal{X}_{0}^{1,p}(\Omega).
 \end{aligned}
 \end{equation*}
 In consideration of the fact that $\phi_{\lambda}^{+} \in \mathcal{X}_{0}^{1,p}(\Omega)$, 
 we can take $v = \phi_{\lambda}^+$ as a test function. 
 
 Now, since 
 $$\text{$\phi_{\lambda}^{+}(x)\phi_{\lambda}^{-}(x)=0$ for a.e. $x \in \Omega$},$$ 
 we easily get that
$$ (\phi_{\lambda}^+ (x) - \phi_{\lambda}^{+}(y))(\phi_{\lambda}^- (x) - \phi_{\lambda}^{-}(y))\leq 0.$$
 Moreover, since both $\Omega_+$ and $\Omega_-$ are non-void open set
(remind that $\phi_\lambda$ is con\-ti\-nuo\-us on $\Omega$ and it changes sign in $\Omega$), we have
\begin{eqnarray*} 
 &&\iint_{\R^{2n}}\frac{|\phi_\lambda(x)-\phi_\lambda(y)|^{p-2}
(\phi_{\lambda}^+ (x) - \phi_{\lambda}^{+}(y))(\phi_{\lambda}^- (x) - \phi_{\lambda}^{-}(y))}
{|x-y|^{n+ps}}\,d x\,d y \\
&& \qquad\qquad\qquad\leq -\int_{\Omega_+}\int_{\Omega_-}
\frac{|\phi_\lambda(x)-\phi_\lambda(y)|^{p-2}
\phi_{\lambda}^+ (x)\phi_{\lambda}^{-}(y)}
{|x-y|^{n+ps}}\,d x\,d y < 0\end{eqnarray*}
and
\begin{eqnarray*} &&\iint_{\R^{2n}}\frac{|\phi_\lambda^+(x)-\phi_\lambda^+(y)|^{p-2}
(\phi_{\lambda}^+ (x) - \phi_{\lambda}^{+}(y))(\phi_{\lambda}^- (x) - \phi_{\lambda}^{-}(y))}
{|x-y|^{n+ps}}\,d x\,d y \\
&& \qquad\qquad\qquad\leq -\int_{\Omega_+}\int_{\Omega_-}
\frac{|\phi^+_\lambda(x)|^{p-2}
\phi_{\lambda}^+ (x)\phi_{\lambda}^{-}(y)}
{|x-y|^{n+ps}}\,d x\,d y < 0.
\end{eqnarray*}
 We can therefore exploit Lemma \ref{lem:algebrico}-(1) with 
 $$a := \phi_{\lambda}^+ (x) - \phi_{\lambda}^{+}(y)\qquad\text{and}\qquad
  b := \phi_{\lambda}^- (x) - \phi_{\lambda}^{-}(y),$$ 
  obtaining (remind that, by assumption, $p\geq 2$)
  \begin{equation*}
  \begin{aligned}
  &\lambda \int_{\Omega^+}|\phi_{\lambda}^+|^{p}\, dx =
  \lambda \int_{\Omega}|\phi_{\lambda}|^{p-2}\phi_{\lambda}\phi_{\lambda}^+ \, dx \\
  &\qquad 
  =\int_{\Omega}\|\nabla \phi_{\lambda}\|^{p-2}\langle \nabla \phi_{\lambda}, \nabla \phi_{\lambda}^{+}\rangle \, dx \\
  & \qquad\qquad 
  + \iint_{\mathbb{R}^{2n}}\dfrac{|\phi_{\lambda}(x)-\phi_{\lambda}(y)|^{p-2}(\phi_{\lambda}(x)-\phi_{\lambda}(y))(\phi_{\lambda}^{+}(x)-\phi_{\lambda}^{+}(y)}{|x-y|^{n+ps}}\, dxdy\\
  &\qquad = \int_{\Omega^+}\|\nabla \phi_{\lambda}^{+}|^{p}\, dx \\
  & \qquad\qquad + \iint_{\mathbb{R}^{2n}}\dfrac{|\phi_{\lambda}(x)-\phi_{\lambda}(y)|^{p-2}(\phi_{\lambda}(x)-\phi_{\lambda}(y))(\phi_{\lambda}^{+}(x)-\phi_{\lambda}^{+}(y)}{|x-y|^{n+ps}}\, dxdy\\
  &\qquad > \int_{\Omega^+}|\nabla \phi_{\lambda}^{+}|^{p}\, dx + \iint_{\mathbb{R}^{2n}}\dfrac{|\phi_{\lambda}^{+}(x)-\phi_{\lambda}^{+}(y)|^p}{|x-y|^{n+ps}}\, dxdy \\
  & \qquad \geq \lambda_{1}(\Omega^+)  \int_{\Omega^+}|\phi_{\lambda}^+|^{p}\, dx,
  \end{aligned}
  \end{equation*}
  \noindent where we used the variational characterization
  of  $\lambda_{1}(\Omega^+)$, see \eqref{eq:deflambdavar}. 
  In particular, this gives that $\lambda > \lambda_{1}(\Omega^+)$. With a similar argument (see 
  e.g. \cite[Lemma\,6.1]{BP}), one can show that $\lambda > \lambda_{1}(\Omega^-)$ as well, and this closes 
  the proof of Lemma~\ref{lem:nodal}. 
 \end{proof}
 By virtue of Theorem \ref{thm.FK} and Lemma \ref{lem:nodal}, we can provide the
 \begin{proof}
 [Proof of Theorem\,\ref{main:thm}]
 We split the proof into two steps.
 \medskip
 
 \textsc{Step I:} In this step, we prove inequality 
 \eqref{eq:HKS}. 
 To this end, let $\phi\in\mathcal{M}(\Omega)$ be a $L^p$-normalized 
 eigenfunction
 associated with $\lambda_2(\Omega)$ (recall the definition of the space~$\mathcal{M}(\Omega)$ in~\eqref{eq:defM}). 
 On account of Theorem \ref{thm:GS}, we know that
 $\phi\in C(\Omega)$.
 
 Moreover, since $\phi$ changes sign in $\Omega$
 (see Proposition \ref{prop.BMV}), we can define the non-void open sets
 $$\Omega_+ := \{u > 0\}\qquad\text{and}\qquad
 \Omega_- := \{u < 0\}.$$
 Then, by combining Lemma \ref{lem:nodal} with
 Theorem \ref{thm.FK}, we get
 \begin{equation} \label{eq:lambda2max}
 \lambda_2(\Omega) > \max\big\{\lambda_1(B_+),\lambda_1(B_-)\big\},
 \end{equation}
 where $B_+$ is a Euclidean ball with volume equal to $|\Omega_+|$ and
 $B_-$ is a Euclidean ball with volume $|\Omega_-|$.
 
 Now, since $\Omega_+\cup\Omega_- = \Omega$, we have
 $$|B_+|+|B_-| = |\Omega_+|+|\Omega_-| \leq |\Omega| = m.$$
Taking into account this inequality, we claim that
 \begin{equation} \label{eq:maxgeqBall}
  \max\big\{\lambda_1(B_+),\lambda_1(B_-)\big\} \geq \lambda_1(B),
 \end{equation}
 being~$B$ a ball of volume~$|\Omega|/2$.
 In order to prove \eqref{eq:maxgeqBall}, we distinguish three cases.
 \begin{itemize}
  \item[(i)] $|B_+|,\,|B_-|\leq m/2$. In this case, since 
 $\lambda_1(\cdot)$ is translation-invariant, 
 we can assume without loss of generality that
 $B\subseteq B_+,\,B_-$;
 as a consequence, since $\lambda_1(\cdot)$ is non-increasing, we obtain
 $$\lambda_1(B_+),\,\lambda_1(B_-)\geq \lambda_1(B),$$
 and this proves the claimed \eqref{eq:maxgeqBall}.
 \vspace*{0.2cm}
 
 \item[(ii)] $|B_-| < m/2 < |B_+|$. In this case, we can assume that
 $B_-\subseteq B\subseteq B_+$;
 from this, since $\lambda_1(\cdot)$ is non-increasing, we obtain
 $$\lambda_1(B_+)\geq \lambda_1(B)\geq \lambda_1(B_-),$$
 and this immediately implies the claimed \eqref{eq:maxgeqBall}.
 \vspace*{0.2cm}
 
 \item[(iii)] $|B_+| < m/2 < |B_-|$. In this last case, 
 it suffices to interchange the r\^oles of the balls
 $B_-$ and $B_+$, and to argue
 exactly as in case~(ii).
 \end{itemize}
 Gathering \eqref{eq:lambda2max} and 
 \eqref{eq:maxgeqBall}, we obtain the claim in~\eqref{eq:HKS}.
 \medskip
 
 \textsc{Step II:} Now we prove the sharpness
 of \eqref{eq:HKS}. To this end, according to
 the statement of the theorem, we
 fix $r > 0$ and we define
 $$\Omega_j := B_r(x_j)\cup B_r(y_j),$$
 where $\{x_j\}_j,\,\{y_j\}_j\subseteq\R^n$ are two sequences satisfying
 \begin{equation} \label{eq:xjyjdiv}
  \lim_{j\to+\infty}|x_j-y_j| = +\infty.
 \end{equation}
 On account of \eqref{eq:xjyjdiv}, we can assume that 
 \begin{equation} \label{eq:disjoint}
  B_r(x_j)\cap B_r(y_j) = \varnothing\qquad{\mbox{ for all }} j\geq 1.
 \end{equation}
 Let now $u_0\in \mathcal{M}(B_r)$ be a $L^p$-normalized eigenfunction associated
 with $\lambda_1(B_r)$ (here, $B_r = B_r(0)$). For every natural number $j\geq 1$, we set
 \begin{equation} \label{eq:defphiij}
  \phi_{j}(x) := u_0(x-x_j)\qquad\text{and}\qquad
 \psi_{j}(x) := u_0(x-y_j).
 \end{equation}
 Since $\lambda_1(\cdot)$ is translation-invariant, it is immediate to check that
 $\phi_j$ and $\psi_j$ are normalized eigenfunctions
 associated with $\lambda_1(B_r(x_j))$ and $\lambda_1(B_r(y_j))$,
 respectively.
 
 Moreover,
 taking into account 
 \eqref{eq:disjoint}, it is easy to see that
\begin{equation}\label{propa}
{\mbox{$\phi_j\equiv 0 $ on $\R^n\setminus B_r(x_j)\supseteq B_r(y_j)$
  and
   $\psi_j\equiv 0$ on $\R^n\setminus B_r(y_j)\supseteq B_r(x_j)$}}\end{equation}
and~$\phi_j\psi_j\equiv 0$ on $\R^n$.

 We then consider the function $f$ defined as follows:
 $$
 f(z_1,z_2) := |z_1|^{\frac{2-p}{p}}z_1\phi_{j}-
 |z_2|^{\frac{2-p}{p}}z_2\psi_j\qquad
 \text{with $z = (z_1,z_2)\in S^1$}.$$
 Taking into account that $B_r(x_j),\,B_r(y_j)\subseteq\Omega_j$
 and $u_0\in \mathcal{M}(B_r)$, it is readily seen that
 $f(S_1)\subseteq \mathcal{X}_0^{1,p}(\Omega_j)$.
 
 Furthermore, the function $f$
 is clearly \emph{odd} and continuous.
 Also, using \eqref{eq:disjoint} and the fact that
 $\phi\equiv 0$ out of $B_r$, one has
 \begin{align*}
  \|f(z_1,z_2)\|^p_{L^p(\Omega_j)} & = \big\||z_1|^{\frac{2-p}{p}}z_1\phi_{j}-
 |z_2|^{\frac{2-p}{p}}z_2\psi_j\big\|^p_{L^p(\Omega_j)}  \\
 &  = |z_1|^2\|\phi_j\|^p_{L^p(B_r(x_j))}
 + |z_2|^2\|\psi_j\|^p_{L^p(B_r(y_j))} \\
 &  =
 (|z_1|^2+|z_2|^2)\|u_0\|_{L^p(B_r)}^p \\&= 1.
 \end{align*}
 We are thereby entitled to use $f$ in the definition
 of $\lambda_2(\Omega)$, see \eqref{eq:deflambda2}:
  setting $a_j:= \phi_j(x)-\phi_j(y)$ and $b_j:= \psi_j(x)-\psi_j(y)$ to simplify the notation,
 this gives, together with~\eqref{eq:disjoint} and~\eqref{propa}, that
 \begin{align*}
  \lambda_2(\Omega_j)
 & \leq 
 \max_{v\in\mathrm{Im}(f)}
  \bigg\{\int_{\Omega_j}
  |\nabla v|^p\,dx  
   +
  \iint_{\R^{2n}}\frac{|v(x)-v(y)|^p}{|x-y|^{n+ps}}
  \,dx\,dy\bigg\} \\
  & 
  = \max_{|\omega_1|^p+|\omega_2|^p = 1}
  \bigg\{\int_{\Omega_j}
  |\nabla(\omega_1\phi_{j}-
 \omega_2\psi_j)|^{p}\,dx  +
  \iint_{\R^{2n}}\frac{|\omega_1a_j-\omega_2b_j
  |^p}{|x-y|^{n+ps}}
  \,dx\,dy\bigg\} \\
  & = 
  \max_{|\omega_1|^p+|\omega_2|^p = 1}
  \bigg\{|\omega_1|^p\int_{B_r(x_j)}|\nabla\phi_j|^p\,dx
  + |\omega_2|^p\int_{B_r(y_j)}|\nabla\psi_j|^p\,dx \\
  & \qquad\qquad\qquad\qquad
  + \iint_{\R^{2n}}\frac{|\omega_1a_j-\omega_2b_j
  |^p}{|x-y|^{n+ps}}
  \,dx\,dy\bigg\} \\
  & = 
  \max_{|\omega_1|^p+|\omega_2|^p = 1}
  \bigg\{\int_{B_r}|\nabla u_0|^p\,d x
  + \iint_{\R^{2n}}\frac{|\omega_1a_j-\omega_2b_j
  |^p}{|x-y|^{n+ps}}\bigg\} .
 \end{align*}
 On the other hand, by applying Lemma \ref{lem:algebrico}-(2), we get
 \begin{align*}
    & \max_{|\omega_1|^p+|\omega_2|^p = 1}
  \bigg\{\int_{B_r}|\nabla u_0|^p\,d x
  + \iint_{\R^{2n}}\frac{|\omega_1a_j-\omega_2b_j
  |^p}{|x-y|^{n+ps}}\bigg\}\\& \leq 
  \max_{|\omega_1|^p+|\omega_2|^p = 1}
  \bigg\{\int_{B_r}|\nabla u_0|^p\,d x
  + |\omega_1|^p\iint_{\R^{2n}}\frac{|\phi_j(x)-\phi_j(y)|^p}{|x-y|^{n+ps}}\,dx\,d y
  \\
  & \qquad\qquad
  + |\omega_2|^p\iint_{\R^{2n}}\frac{|\psi_j(x)-\psi_j(y)|^p}{|x-y|^{n+ps}}\,dx\,d y
  \\
  & \qquad\qquad+
  c_p\iint_{\R^{2n}}\frac{|(\omega_1a_j)^2+(\omega_2b_j)^2|^{\frac{p-2}{2}}|\omega_1\omega_2a_jb_j|}
  {|x-y|^{n+ps}}\,dx\,d y\bigg\} \\
  & =
  \max_{|\omega_1|^p+|\omega_2|^p = 1}
  \bigg\{\int_{B_r}|\nabla u_0|^p\,d x
  + \iint_{\R^{2n}}\frac{|u_0(x)-u_0(y)|^p}{|x-y|^{n+ps}}\,dx\,d y
  \\
  & \qquad\qquad+
  c_p\iint_{\R^{2n}}\frac{|(\omega_1a_j)^2+(\omega_2b_j)^2|^{\frac{p-2}{2}}|\omega_1\omega_2a_jb_j|}
  {|x-y|^{n+ps}}\,dx\,d y\bigg\}
  \\
  & = \lambda_1(B_r)+c_p\,
  \max_{|\omega_1|^p+|\omega_2|^p}\iint_{\R^{2n}}\frac{|(\omega_1a_j)^2+(\omega_2b_j)^2|^{\frac{p-2}{2}}|\omega_1\omega_2a_jb_j|}
  {|x-y|^{n+ps}}\,dx\,d y,
 \end{align*}
where we have also used that $u_0$ is a normalized eigenfunction associated with
the first eigenvalue $\lambda_1(B_r)$.
 
 Summarizing, we have proved that
 \begin{equation} \label{eq:topasslimit}
 \begin{split}
  & \lambda_2(\Omega_j) \leq \lambda_1(B_r) \\
  & \qquad\qquad
  +
  c_p\,
  \max_{|\omega_1|^p+|\omega_2|^p}\iint_{\R^{2n}}\frac{|(\omega_1a_j)^2+(\omega_2b_j)^2|^{\frac{p-2}{2}}|\omega_1\omega_2a_jb_j|}
  {|x-y|^{n+ps}}\,dx\,d y.
  \end{split}
 \end{equation}We now set
 $$\mathcal{R}_j := \max_{|\omega_1|^p+|\omega_2|^p=1}\iint_{\R^{2n}}
 \frac{|(\omega_1a_j)^2+(\omega_2b_j)^2|^{\frac{p-2}{2}}|\omega_1\omega_2a_jb_j|}
  {|x-y|^{n+ps}}\,dx\,d y$$
 and we claim that $\mathcal{R}_j\to 0$ as $j\to+\infty$.
 
 Indeed, since
 $\phi_j\psi_j\equiv 0$ on $\R^n$, we have that
 $$
 a_jb_j = -\psi_j(x)\psi_j(y) -\phi_j(y)\psi_j(x).$$
 As a consequence, recalling~\eqref{propa}
 \begin{align*}
  0\leq \mathcal{R}_j
  & 
  \leq 
  2\max_{|\omega_1|^p+|\omega_2|^p=1}\int_{B_r(x_j)}\int_{B_r(y_j)}
  \frac{|(\omega_1a_j)^2+(\omega_2b_j)^2|^{\frac{p-2}{2}}|\omega_1\omega_2a_jb_j|}
  {|x-y|^{n+ps}}\,dx\,d y \\
  &
  \leq \frac{2}{|x_j-y_j|-2r} \\
  & \qquad\times\max_{|\omega_1|^p+|\omega_2|^p = 1}\int_{B_r(x_j)}\int_{B_r(y_j)}
  |(\omega_1a_j)^2+(\omega_2b_j)^2|^{\frac{p-2}{2}}|\omega_1\omega_2a_jb_j|\,dx\,d y \\
  & = \frac{2}{|x_j-y_j|-2r}
  \int_{B_r}\int_{B_r}
  |u_0(x)^2+u_0(y)^2|^{\frac{p-2}{2}}
  |u_0(x)u_0(y)|\,dx\,d y  \\[0.1cm]
  & =: \frac{2c_0}{|x_j-y_j|-2r}.
 \end{align*}
 Taking into account \eqref{eq:xjyjdiv}, we thereby conclude that
 \begin{equation} \label{eq:limRjzero}
  \lim_{j\to+\infty}\mathcal{R}_j = 0.
 \end{equation}
 Gathering together \eqref{eq:topasslimit} and 
 \eqref{eq:limRjzero}, we obtain the desired result in~\eqref{eq:optimal}.
 \end{proof}

\vfill

\begin{thebibliography}{100}

\bibitem{ABA}
N. Abatangelo, M. Cozzi, {\em
An elliptic boundary value problem with fractional nonlinearity},
SIAM J. Math. Anal. {\bf 53}(3) (2021), 3577--3601.

\bibitem{AlLieb}
F. J. Almgren, E. H. Lieb,
{\em Symmetric decreasing rearrangement is sometimes continuous},
J. Amer. Math. Soc. \textbf{2} (1989), 683--773.

\bibitem{BANL}
R. Ba\~{n}uelos, R. Lata\l a, P.J. M\'{e}ndez-Hern\'{a}ndez, {\em
A Brascamp-Lieb-Luttinger-type inequality and applications to symmetric stable processes},
Proc. Amer. Math. Soc. {\bf 129}(10) (2001), 2997--3008.

\bibitem{BAR12}
G. Barles, E. Chasseigne, A. Ciomaga, C. Imbert, \emph{Lipschitz regularity of solutions for mixed integro-
differential equations}, J. Differential Equations {\bf 252} (2012), no. 11, 6012--6060. 

\bibitem{BARLES08} G. Barles, C. Imbert, \emph{Second-order elliptic integro-differential equations: 
viscosity solutions' theory revisited}, Ann. Inst. H. Poincar\'e Anal. Non Lin\'{e}aire 
{\bf 25} (2008), no. 3, 567--585.

\bibitem{BDVV}
S. Biagi, S. Dipierro, E. Valdinoci, E. Vecchi,
{\em Mixed local and nonlocal elliptic operators: regularity and maximum principles}, preprint.
 arXiv:2005.06907 

\bibitem{BDVVAcc}
S. Biagi, S. Dipierro, E. Valdinoci, E. Vecchi,
{\em Semilinear elliptic equations involving mixed local
and nonlocal operators},
 Proc. Roy. Soc. Edinburgh Sect. A {\bf 151}(5) (2021), 1611--1641. 

\bibitem{BDVV2}
S. Biagi, S. Dipierro, E. Valdinoci, E. Vecchi,
{\em A Faber-Krahn inequality for mixed local and nonlocal operators}, preprint.
 arXiv:2104.00830

 \bibitem{BMV}
 S. Biagi, D. Mugnai, E. Vecchi,
{\em Global boundedness and maximum principle for a Brezis-Oswald approach to mixed local and nonlocal operators}, preprint. arXiv:2103.11382

 \bibitem{BISWAP10} I. H. Biswas, E. R. Jakobsen, K. H. Karlsen, \emph{
 Viscosity solutions for a system of integro--PDEs and connections to optimal switching and control of jump-diffusion processes}, Appl. Math. Optim. {\bf 62} (2010), no. 1, 47--80.

\bibitem{CINTI}
L. Brasco, E. Cinti, S. Vita, {\em
A quantitative stability estimate for the fractional Faber-Krahn inequality},
J. Funct. Anal. {\bf 279}(3) (2020), 108560, 49 pp.

\bibitem{FRANZ}
L. Brasco, G. Franzina, {\em On the Hong-Krahn-Szego inequality for the $p$-Laplace operator},
Manuscripta Math. {\bf 141}(3-4) (2013), 537--557.

\bibitem{IFBBPL}
L. Brasco, E. Lindgren, E. Parini, {\em The fractional Cheeger problem},
Interfaces Free Bound. {\bf 16}(3) (2014), 419--458.

\bibitem{BLS}
 L. Brasco, E. Lindgren, A. Schikorra, 
 {\em Higher H\"older regularity for the fractional $p$-Laplacian in the superquadratic case}, 
 Adv. Math. \textbf{338} (2018), 782--846.
 
\bibitem{BP}
L. Brasco, E. Parini, 
\emph{The second eigenvalue of the fractional $p$-Laplacian},
Adv. Calc. Var. {\bf 9}(4) (2016), 323--355.

\bibitem{Brezis}
H. Brezis,
 \emph{Functional analysis, Sobolev spaces and partial differential equations}, 
  U\-ni\-ver\-si\-text, Springer, New York, 2011. 

\bibitem{BRIN}
S. Brin, L. Page, {\em The anatomy of a large-scale hypertextual Web search engine},
Computer Networks and ISDN Systems {\bf 30}(1-7), 107--117. 

  \bibitem{BSM}
S. Buccheri, J.V. da Silva, L.H. de Miranda,
 \emph{A system of local/nonlocal $p$-Laplacians: The eigenvalue problem and its asymptotic 
 limit as $p\to\infty$}, Asymptot. Anal. (2021), in press.\\
\textrm{doi:} 10.3233/ASY-211702

\bibitem{CABRE}
 X. Cabr\'e, S. Dipierro, E. Valdinoci, {\emph{The Bernstein technique for integro-differential equations}}, preprint,
 arXiv:2010.00376
 
 \bibitem{CKSV}
 Z.-Q. Chen, P. Kim, R. Song, Z. Vondra\v{c}ek, 
 {\em Boundary Harnack principle for $\Delta + \Delta^{\alpha/2}$}, 
 Trans. Amer. Math. Soc. {\bf 364}(8) (2012), 4169--4205. 

\bibitem{CK}
Z.-Q. Chen, T. Kumagai. 
{\em A priori H\"{o}lder estimate, parabolic Harnack principle and heat kernel estimates for diffusions with jumps},
 Rev. Mat. Iberoam. {\bf 26}(2) (2010), 551--589.

 \bibitem{SilvaSalort}
J.V. da Silva, A.M. Salort,
 {\em A limiting problem for local/non-local {$p$}-{L}aplacians with concave-convex nonlinearities},
 Z. Angew. Math. Phys. \textbf{71} (2020), Paper No. 191, 27pp.
 
\bibitem{DFR}
L.M. Del Pezzo, R. Ferreira, J.D. Rossi,
{\em Eigenvalues for a combination between local and nonlocal $p$-Laplacians},
Fract. Calc. Appl. Anal. {\bf 22}(5) (2019), 1414--1436.

\bibitem{TESO17}
F. del Teso, J. Endal, E.R. Jakobsen, {\emph{On distributional solutions of local and nonlocal problems
of porous medium type}}, C. R. Math. Acad. Sci. Paris {\bf 355} (2017), no. 11, 1154--1160. 

\bibitem{AGNE}
A. Di Castro, T. Kuusi, G. Palatucci, {\em
Nonlocal Harnack inequalities},
J. Funct. Anal. 267 (2014), no. 6, 1807--1836.

\bibitem{DRV}
E. Di Nezza, G. Palatucci, E. Valdinoci,
{\em Hitchhiker's guide to the fractional Sobolev spaces}, 
Bull. Sci. Math. {\bf 136} (2012), 521--573.

\bibitem{DPLV1}
S. Dipierro, E. Proietti Lippi, E. Valdinoci, 
{\em Linear theory for a mixed operator with Neumann conditions}, Asymptot. Anal., in press

\bibitem{DPLV2}
S. Dipierro, E. Proietti Lippi, E. Valdinoci, 
{\em (Non)local logistic equations with Neumann conditions}, preprint. arXiv:2101.02315

\bibitem{DV}
S. Dipierro, E. Valdinoci,
{\em Description of an ecological niche for a mixed local/nonlocal di\-sper\-sal: 
an evolution equation and a new Neumann condition arising from the superposition of Brownian and L\'{e}vy processes},
Phys. A. {\bf 575} (2021), 126052.

\bibitem{dSOR}
B.C. dos Santos, S.M. Oliva, J.D. Rossi,
{\em A local/nonlocal diffusion model}, 
 Appl. Anal. (2021), in press.

\bibitem{FABER}
G. Faber, {\em Beweis, dass unter allen homogenen Membranen von gleicher Fl\"ache und gleicher Spannung
die kreisf\"ormige den tiefsten Grundton gibt}, Sitzungsber. Bayer. Akad. Wiss. M\"unchen, Math.-Phys. Kl. (1923),
169--172.

\bibitem{Foondun}
M. Foondun, 
{\em Heat kernel estimates and Harnack inequalities for some Dirichlet forms with nonlocal part},
Electron. J. Probab., {\bf 14}(11) (2009), 314--340.

\bibitem{FS}
 R.L. Frank, R. Seiringer, 
 \emph{Non-linear ground state representations and sharp Hardy i\-ne\-qua\-li\-ties}, 
 J. Funct. Anal. \textbf{255} (2008), 3407--3430.
 
 \bibitem{FRNPAL}
G. Franzina, G. Palatucci, {\em
Fractional $p$-eigenvalues},
Riv. Math. Univ. Parma (N.S.) {\bf 5}(2) (2014), 373--386.

 \bibitem{GarainKinnunen}
 P. Garain, J. Kinnunen,
 {\em On the regularity theory for mixed local and nonlocal quasilinear elliptic equations}, preprint.
 arXiv:2102.13365

 \bibitem{GarainKinnunen2}
 P. Garain, J. Kinnunen,
 {\em Weak Harnack inequality for a mixed local and nonlocal parabolic equation}, 
 preprint. arXiv:2102.13365
 
  \bibitem{GarainKinnunen3}
 P. Garain, J. Kinnunen,
 {\em On the regularity theory for mixed local and nonlocal quasilinear parabolic equations}, 
 preprint. arXiv:2109.00281.
 
 \bibitem{GarainUkhlov}
 P. Garain, A. Ukhlov,
 {\em Mixed local and nonlocal Sobolev inequalities with extremal and associated quasilinear singular elliptic problems}, preprint. arXiv:2106.04458

\bibitem{GQR}
A. G\'{a}rriz, F. Quir\'{o}s, J.D. Rossi, 
{\em Coupling local and nonlocal evolution equations}, 
Calc. Var. Partial Differential Equations {\bf 59} (2020).

 \bibitem{GaMe}
 M.G. Garroni, J.L. Menaldi,
 \emph{Second order elliptic integro-differential problems}, 
Research Notes in Mathematics, 430. 
 Chapman \& Hall/CRC, Boca Raton, FL, 2002. xvi+221 pp.

\bibitem{GoelSreenadh}
D. Goel, K. Sreenadh, 
{\em On the second eigenvalue of combination between local and nonlocal $p$-Laplacian}, 
Proc. Amer. Math. Soc. {\bf 147} (2019), no. 10, 4315--4327.

\bibitem{TECH}
T. Haveliwala, S. Kamvar {\em The Second Eigenvalue of the Google Matrix},
Stanford University Technical Report: 7056 (2003). arXiv:math/0307056

\bibitem{HILB}
D. Hilbert, {\em Grundz\"uge einer allgeminen Theorie der linaren Integralrechnungen. (Erste Mitteilung).}
Nachrichten von der Gesellschaft der Wissenschaften zu G\"ottingen, Mathe\-ma\-tisch\--Phy\-si\-ka\-
lische Klasse (1904), 49--91.

\bibitem{7CH}
I. Hong, {\em
On an inequality concerning the eigenvalue problem of membrane},
K\={o}dai Math. Sem. Rep. {\bf 6} (1954), 113--114. 

 \bibitem{JAKO1-05}
 E.R. Jakobsen, K.H. Karlsen, {\em Continuous dependence estimates for viscosity solutions of integro--PDEs}, J. Differential Equations {\bf 212} (2005), no. 2, 278--318. 

 \bibitem{JAKO2} E.R. Jakobsen, K.H. Karlsen,
 {\em A ``maximum principle for semicontinuous functions'' applicable to integro-partial differential 
 equations}, NoDEA Nonlinear Differential Equations Appl. {\bf 13} (2006), 137--165.

\bibitem{KRA}
E. Krahn, {\em \"Uber eine von Rayleigh formulierte Minimaleigenschaft des Kreises}, Math. Ann. {\bf 94} (1925), 97--100.

\bibitem{7CK}
E. Krahn, {\em \"Uber Minimaleigenschaften der Kugel in drei und mehr Dimensionen},
Acta Univ. Dorpat A {\bf 9} (1926), 1--44.

\bibitem{KMS}
T. Kuusi, G. Mingione, Y. Sire, 
{\em Nonlocal equations with measure data},
Comm. Math. Phys. {\bf 337}(3) (2015), 1317--1368.

\bibitem{LIND}
E. Lindgren, P. Lindqvist, {\em
Fractional eigenvalues},
Calc. Var. Partial Differential Equations {\bf 49}(1-2) (2014), 795--826.

 \bibitem{PAG}
 G. Pagnini, S. Vitali, \emph{Should I stay or should I go? Zero-size jumps in random walks for L\'evy 
 flights}.
Fract. Calc. Appl. Anal. \textbf{24} (2021), no. 1, 137--167.

\bibitem{C7P}
G. P\'olya, {\em On the characteristic frequencies of a symmetric membrane},
Math. Z. {\bf 63} (1955), 331--337. 

\bibitem{PSZ}
G. P\'olya, G. Szeg\"{o}, \emph{Isoperimetric Inequalities in Mathematical Physics},
Annals of Ma\-the\-ma\-tics Studies. Princeton, N.J.: Princeton University Press, 1951.

\bibitem{SalortVecchi}
A.M. Salort, E. Vecchi,
{\em On the mixed local--nonlocal H\'{e}non equation},

preprint. arXiv:2107.09520

\bibitem{SerVal2}
R. Servadei and E. Valdinoci,
\emph{A Brezis-Nirenberg result for non-local critical equations in low dimension}, 
Commun. Pure Appl. Anal. \textbf{12} (2013), 2445--2464.

\bibitem{SIRE}
Y. Sire, J.L. V\'azquez, B. Volzone, {\em
Symmetrization for fractional elliptic and parabolic equations and an isoperimetric application},
Chin. Ann. Math. Ser. B {\bf 38}(2) (2017), 661--686.

\end{thebibliography}
\end{document}